\numberwithin{equation}{section}
\theoremstyle{plain}
\newtheorem{theorem}{Theorem}[section]
\newtheorem{proposition}[theorem]{Proposition}
\newtheorem{cor}[theorem]{Corollary}
\newtheorem{lemma}[theorem]{Lemma}
\theoremstyle{remark}
\newtheorem{rem}[theorem]{Remark}
\theoremstyle{definition}
\newtheorem{defn}{Definition}[section]
\def\R{{\mathbb R}}
\def\p#1{{\left({#1}\right)}}
\def\n#1{{\left\|{#1}\right\|}}
\def\abs#1{{\left|{#1}\right|}}
\def\bra#1{{\langle{#1}\rangle}}
\def\Rnx{{\mathbb R}^n_x}
\def\Rone{{\mathbb R}}
\def\Rn{{{\mathbb R}^n}}
\def\Scal{\mathcal{S}}
\def\Nat{{\mathbb N}}
\def\dslash{d\llap {\raisebox{.9ex}{$\scriptstyle-\!$}}}
\def\jp#1{{\left\langle{#1}\right\rangle}}
\title{Weighted Sobolev
 $L^2$ estimates for a class of Fourier integral operators}
\author[]{Michael Ruzhansky and Mitsuru Sugimoto}
\address{
  Michael Ruzhansky:
  \endgraf
  Department of Mathematics
  \endgraf
  Imperial College London
  \endgraf
  180 Queen's Gate, London SW7 2AZ, UK
  \endgraf
  {\it E-mail address} {\rm m.ruzhansky@imperial.ac.uk}
  \endgraf
  \medskip
  Mitsuru Sugimoto:
  \endgraf
  Department of Mathematics, Graduate School of Science
  \endgraf
  Osaka University
  \endgraf
  Machikaneyama-cho 1-16, Toyonaka, Osaka 560-0043, Japan
  \endgraf
  {\it E-mail address} {\rm sugimoto@math.wani.osaka-u.ac.jp}
  }
\thanks{The first author was supported by the JSPS Invitational
 Research Fellowship.}
\date{\today}
\begin{document}
\begin{abstract}
In this paper we develop elements of the global calculus of
Fourier integral operators in $\Rn$ under minimal decay 
assumptions on phases and amplitudes. We also establish
global weighted Sobolev $L^2$ estimates for a class
of Fourier integral operators that appears in the
analysis of global smoothing problems for dispersive
partial differential equations. As an application,
we exhibit a new type of smoothing estimates for
hyperbolic equations, where the decay of data in space
is quantitatively translated into the time decay of
solutions. 
\end{abstract}

\maketitle

\section{Introduction}

The paper will deal with a class of Fourier integral operators
on $\Rn$ that appears in the analysis of smoothing problems
for dispersive evolution equations, such as hyperbolic,
Schr\"odinger, KdV, and others. 

Apart from this motivation, we are interested in global
properties of the calculus of pseudo-differential and Fourier
integral operators under minimal assumptions. This follows
a general line of research for deriving regularity 
properties of pseudo-differential operators under minimal
assumptions on the symbols. Global $L^2$ boundedness properties
of pseudo-differential operators have been thoroughly 
investigated in the literature (e.g. 
Calder\'on and Vaillancourt \cite{CV}, Childs \cite{Ch},
Coifman and Meyer \cite{CM}, Cordes \cite{Co}, 
Sugimoto \cite{Su}, etc.).
These results have been later extended to classes of
Fourier integral operators by the authors in \cite{RS}. 
Local $L^p$ and other estimates for non-degenerate
Fourier integral operators are also quite well understood,
see e.g. the survey paper by the first author \cite{R}.

Results on the global $L^2$ boundedness of Fourier integral 
operators in $\Rn$ appeared in the papers of
Asada and Fujiwara \cite{AF}, Kumano-go \cite{Ku},
Boulkhemair \cite{Bo}, with different assumptions on the
regularity of phases and amplitudes. In all these papers
the assumption was made that the second order derivatives of
the phase with respect to the frequency variables must be
globally bounded. Unfortunately, this assumption fails in
applications to global smoothing problems, and it was 
recently removed by the authors in \cite{RS4}.

The method of dealing with
dispersive equations by globally reducing them to normal
forms has been developed by the authors in 
\cite{RS2}. This method proved to be very effective 
for a variety of equations. For example, a critical case of
the Kato--Yajima \cite{KY}
smoothing estimate for Schr\"odinger equations was proved
in \cite{RS3} by globally reducing the estimate to a model
case with useful additional
commutator properties. The same strategy
was later developed in \cite{RS6} to treat smoothing estimates
for general dispersive equations with constant 
and time-dependent coefficients.
Instead of previously used spectral methods or dual restriction
theorems from harmonic analysis, the approach of
\cite{RS2}--\cite{RS5} relies on global $L^2$ estimates for
the canonical transforms that in turn can be realised as
a class of globally defined Fourier integral operators in $\Rn$. 
Moreover, smoothing estimates themselves are essentially global 
weighted Sobolev $L^2$ estimates for solutions to dispersive
equations, and together with Strichartz estimates they
constitute the main tools for dealing with time well-posedness
questions for nonlinear evolution equations.

Transforming smoothing estimates by the method
of canonical transforms leads to the necessity to analyse
weighted Sobolev estimates for Fourier integral operators.
In \cite{RS4}, the authors investigated questions of $L^2$ and
weighted $L^2$ boundedness for the appearing operators.
In Theorem \ref{th:sob1} of this paper we will now give a result in 
weighted Sobolev spaces over $L^2$. This result will rely on
a global calculus which is developed in Section 2.
Rudiments of such estimates without asymptotic formulae
but with estimates for remainders appeared in \cite{RS3},
while elements of the calculus (without proofs) were 
announced in \cite{RS5}. In this sense, the present paper
is the continuation and the development of the subjects
of authors' papers \cite{RS3} and \cite{RS4}. In particular,
the first part of this paper will provide a calculus 
background to the better understanding
of estimates of the remainders in \cite{RS3}, this allowing
further applications to similar problems with certain 
structures in operators.

Thus, the first
aim of this paper is to develop the calculus necessary
for these purposes, at the same time investigating the independent
question of what are the minimal decay assumptions for
the global calculus to exist. Consequently, we are also interested
in global weighted estimates for Fourier integral operators
in Sobolev spaces, and in
applications to smoothing problems. This is especially 
important in the investigation of global smoothing properties
for equations with variable coefficients, a topic that will
be treated in detail elsewhere.

We will be dealing with operators $T$ that can
be globally defined by
\begin{equation}\label{eq:tdef}
Tu(x)=\int_\Rn\int_\Rn e^{i(\phi(x,\xi)-y\cdot\xi)} a(x,y,\xi)u(y)
dy \dslash\xi,
\end{equation}
with a real-valued phase function $\phi$ and a 
smooth amplitude $a(x,y,\xi)$, satisfying estimates
\[
|\partial_x^\alpha\partial_y^\beta\partial_\xi^\gamma
a(x,y,\xi)|\leq
C_{\alpha\beta\gamma}\bra{x}^{m_1}\bra{y}^{m_2}\bra{\xi}^{m_3},
 \textrm{ for all }x,y,\xi\in\Rn,
\]
for all multi-indices $\alpha, \beta, \gamma$. 
Here and in the sequel we use the notation
$\dslash\xi=(2\pi)^{-n}d\xi$ and
$\bra{x}=(1+|x|^2)^{1/2}.$ 
As usual, we understand these
operators as limits in $\Scal(\Rn)$ as $\epsilon\to 0$, of
operators with amplitudes
$a_\epsilon(x,y,\xi)=a(x,y,\xi)\gamma(\epsilon y,\epsilon \xi)$,
where $\gamma\in C_0^\infty(\Rn\times\Rn)$ equals 1 near the
origin. These operators can be naturally 
extended to $\Scal^\prime(\Rn).$ We will adopt the minus sign in 
the exponent for some technical convenience,
and also in order to make the exposition directly applicable to
pseudo-differential operators, thus providing new weighted
Sobolev estimates for pseudo-differential operators as well.

Let us note that operators with amplitude $a(x,y,\xi)=a(x,\xi)$
independent of $y$, and with
representation (\ref{eq:tdef}) locally in
$x$, appear as solutions to the Cauchy problem for strictly
hyperbolic equations. Their properties have been studied
extensively (see, e.g. Kumano-go \cite{Ku1}, and also 
Stein \cite[Chapter IX]{St}). A global in $x$
representation with amplitude satisfying some decay has been 
also used
to study global properties of solutions to the Cauchy problem.
Thus, problems with phases and amplitudes in 
so-called SG--classes were investigated
by Coriasco \cite{Cor}. In \cite{BBR}, Boggiatto, Buzano and
Rodino investigated
global problems with phases and amplitudes from some
polynomial classes associated to the Newton polygon of the
hyperbolic operator. In comparison with these results, we will
impose much less restrictive 
conditions on phases and amplitudes. In addition, from our
theorems on compositions, it will be 
easy to see that when the original
amplitude satisfies some additional decay conditions, the
amplitude of the composition will satisfy the same conditions 
too. In this way we will automatically recover global calculi
developed for SG--operators by Coriasco \cite{Cor},
for operators with certain polynomial weights by
Boggiatto, Buzano, and Rodino \cite{BBR}, and to a
certain extent for
operators of Gelfand--Shilov type by Cappiello \cite{Ca},
as special cases of our calculus.
Below we will give an example of such an argument.

A special case of operator $T$ occurs when
$\phi(x,\xi)=x\cdot\xi.$ In this case $T$ is a pseudo-differential
operator. In Corollary \ref{cor:repr1} we will show that if
derivatives of the amplitude $a(x,y,\xi)$
decay in the second variable, $T$ can
be defined by an amplitude $p(x,\xi)$ of order $m_1+m_2$ in $x$,
and $m_3$ in $\xi$, in the usual form of pseudo-differential 
operators.

To model weighted Sobolev spaces, we will use a convenient class
of operators, the so-called SG pseudo-differential operators.
We will say that $p=p(x,\xi)\in SG^{t_1,t_2}(\Rn\times\Rn)$ if $p$
is smooth and if for all $\alpha, \beta$ we have the estimate
\[
|\partial_\xi^\alpha\partial_x^\beta p(x,\xi)|\leq C_{\alpha\beta}
\bra{x}^{t_1-|\beta|} \bra{\xi}^{t_2-|\alpha|},
\textrm{ for all } x,\xi\in\Rn.
\]
The corresponding pseudo-differential operator 
$P\in Op(SG^{t_1,t_2})$ is
defined by
\begin{equation}\label{eq:pdef}
Pu(x)=\int_\Rn\int_\Rn e^{i(x-y)\cdot\xi} p(x,\xi)u(y) dy 
\dslash\xi.
\end{equation}
Note that in principle one can consider more general SG--amplitudes
$a(x,y,\xi)$ instead of $p(x,\xi)$ in
(\ref{eq:pdef}), i.e. smooth functions $a$,
for some $m_1, m_2, m_3\in\Rone$
satisfying
\[
|\partial_\xi^\alpha\partial_x^\beta\partial_y^\gamma
a(x,y,\xi)|\leq C_{\alpha\beta\gamma} \bra{x}^{s_1-|\beta|}
\bra{y}^{s_2-|\gamma|}\bra{\xi}^{s_3-|\alpha|},
\textrm{ for all } x,y,\xi\in\Rn,
\]
and all multi-indices $\alpha, \beta, \gamma$.
In \cite{Co2}, Cordes shows that such symbols can be reduced to
$p=p(x,\xi)\in SG^{s_1+s_2,s_3}$ with an asymptotic formula
\[
p(x,\xi)\thicksim\sum_{\alpha\in\Nat^n}
\frac{i^{-|\alpha|}}{\alpha !} \partial_\xi^\alpha
\partial_y^\alpha a(x,y,\xi)|_{y=x}.
\]
Let us show that we automatically recover such 
(and other similar) results in the
framework of our calculus. Indeed,
from the asymptotic formula of Corollary \ref{cor:repr} we
see that if the amplitude $a(x,y,\xi)$ is an SG--amplitude, then
$p(x,\xi)$ must be an SG--amplitude as well. This type of argument
can be used with different decay assumptions (such as
polynomial, quasi-homogeneous, 
Gelfand--Shilov, etc.), to yield similar
conclusions.

We note also that the adjoint operator $T^*$ of $T$ 
from \eqref{eq:tdef} is given by
\[
T^*v(x)=\int_\Rn\int_\Rn
e^{i(x\cdot\xi-\phi(y,\xi))}\overline{a(y,x,\xi)} v(y) dy 
\dslash\xi,
\]
and it is
operators of this type that appear in global smoothing problems
(\cite{RS2}--\cite{RS4}).

In Section 2 we will develop elements of the global calculus
dealing with global compositions of
operators \eqref{eq:tdef} with pseudo-differential operators.
In the proofs of Theorem \ref{prop:alt} and Lemma
\ref{l:composition}, we will assume that all integrations by parts
are carried out even number of times, so that we do not have 
to keep track of the sign in front of the integral.
Since we make a choice to represent the phase not symmetrically
with respect to $x$ and $y$, we will have non-symmetric
statements also
in our theorems. Thus, we will have two different versions of
composition formulae for $T\circ P$ based on whether we prefer
to keep an arbitrary phase (as in Theorem \ref{prop:alt})
or to have the amplitude dependent on less variables
(as in Theorem \ref{th:tp}). For the composition $P\circ T$
pseudo-differential operator $P$ interacts with the phase
of $T$, and we will have Theorem \ref{th:pt}
dealing with this case.
In terms of the growth properties of the phase function,
we will usually think of a typical situation for
Fourier integral operators when the phase
$\phi$ is of order one in both $x$ and $\xi$ and the growth
may be cancelled once we take a derivative (for precise
assumptions see the corresponding theorems).

In Section 3 we will apply the established
calculus to derive weighted
Sobolev $L^2$ estimates for our classes of 
Fourier integral operators. This will extend
weighted $L^2$ estimates established by the authors
in \cite{RS4} to weighted Sobolev spaces over 
$L^2$, the setting important
for smoothing problems.

In Section 4 we will apply the calculus and
weighted estimates to establish a number of new smoothing
estimates for hyperbolic equations, relating time and space
decay of solution to that of the Cauchy data. Smoothing estimates
for dispersive equations have been thoroughly analysed in
the literature (e.g. \cite{BD}, \cite{BK}, \cite{CS},
\cite{KY}, \cite{RS6},
\cite{Si}, \cite{Sj}, \cite{V}, \cite{Wa}, and many other papers). 
The idea is that integration
of the solution to an evolution equation with respect to time
brings an improvement in the regularity with respect to $x$.
This is a phenomenon similar to the one exhibited in Strichartz
estimates where one observes an improved behaviour in the scale
of $L^p$ spaces. Smoothing estimates are centred
at the improvement in Sobolev
regularity in $L^2$, and this improvement
depends on the order of the evolution equation. 
Moreover, for applications to the time well-posedness 
problems for nonlinear
equations, one is looking for global smoothing estimates,
in which one inserts additional weights to make estimates
work globally in space--time. In this respect,
the class of hyperbolic equations
can be viewed as the endpoint case for smoothing estimates,
in the sense that there is no improvement in regularity, but 
a question of the weight remains. In fact, the problem of
finding the weight can be actually 
viewed as the most important here,
since the method developed by the authors in \cite{RS6} allows
to extend such estimates further to other evolution equations
(e.g. to Schr\"odinger, KdV, and others), automatically
gaining the improvement in the regularity with respect
to $x$. Estimates presented
in Theorem \ref{ths} are a new type of estimates for hyperbolic
equations that we can derive using the global calculus developed 
in the first part of the paper.

If we do not specify the
domain of integration, it will always mean that the
integration is performed over the whole space $\Rn$.
As usual, we will denote $D^\alpha=i^{-|\alpha|}\partial_\alpha$.
We also denote
$\dslash\xi=(2\pi)^{-n}d\xi$ and $\bra{x}=(1+|x|^2)^{1/2}.$
Constants will be usually denoted by letter $C$, and their
values may differ on different occasions.

\section{Composition with pseudo-differential operators}

In this section we will
study the composition of $T$ with pseudo-differential
operators. Let $T$ be given by
\eqref{eq:tdef} and let $P$ be given by \eqref{eq:pdef}.
The composition $PT$ is then of the form
\begin{equation}\label{eq:pt}
\begin{aligned}
(PTu)(x) &
= \int_\Rn\int_\Rn 
e^{i(x-y)\cdot\eta} p(x,\eta) (Tu)(y) dy \dslash\eta \\
& = \int_\Rn\int_\Rn\int_\Rn\int_\Rn
 e^{i((x-y)\cdot\eta+\phi(y,\xi)-z\cdot\xi)}
p(x,\eta) a(y,z,\xi) u(z) dz \dslash\xi dy \dslash\eta \\
& = \int_\Rn\int_\Rn
 e^{i(\phi(x,\xi)-z\cdot\xi)} c_1(x,z,\xi) u(z) dz 
\dslash\xi,
\end{aligned}
\end{equation}
where
\begin{equation}\label{eq:c1}
c_1(x,z,\xi)=\int_\Rn\int_\Rn
 e^{i(\phi(y,\xi)-\phi(x,\xi)+(x-y)\cdot\eta)}
a(y,z,\xi) p(x,\eta) dy \dslash\eta.
\end{equation}
The composition $TP$ is of the form
\begin{equation}\label{eq:tp}
\begin{aligned}
(TPu)(x) &
= \int_\Rn\int_\Rn 
e^{i(\phi(x,\xi)-y\cdot\xi)} a(x,y,\xi) (Pu)(y) dy 
\dslash\xi \\
& = \int_\Rn\int_\Rn\int_\Rn\int_\Rn
 e^{i(\phi(x,\xi)-y\cdot\xi+(y-z)\cdot\eta)}
a(x,y,\xi) p(y,\eta) u(z) dz \dslash\eta dy \dslash\xi \\
& = \int_\Rn\int_\Rn 
e^{i(\phi(x,\eta)-z\cdot\eta)} c_2(x,z,\eta) u(z) dz
\dslash\eta,
\end{aligned}
\end{equation}
where
\begin{equation}\label{eq:c2}
c_2(x,z,\eta)= c_2(x,\eta)=\int_\Rn\int_\Rn
e^{i(\phi(x,\xi)-\phi(x,\eta)+y\cdot(\eta-\xi))} a(x,y,\xi)
p(y,\eta) dy \dslash\xi.
\end{equation}
Note that this representation of $TP$ makes the amplitude
dependent on two variables only
(independent of $z$). This will be used later in
Corollary \ref{cor:repr}. We also note that both amplitudes $c_1$
and $c_2$ are of the same type, if we interchange variables $x, y$
with variables $\xi, \eta$, respectively. The difference is that
the integration is carried out
with respect to one or two variables of the
amplitude $a$ in formulae for
$c_1$ and $c_2$, respectively. We will use this fact
and will treat both
cases simultaneously in Lemma \ref{l:composition}.

Let us now use another representation of $TP$ to obtain a slightly
different estimate for its amplitude. 
In fact, one can readily see that we can represent $TP$ as
\begin{equation*}\label{eq:tpalt}
(TPu)(x)=\int_\Rn\int_\Rn e^{i(\phi(x,\xi)-z\cdot\xi)}
c(x,z,\xi) u(z) dz \dslash\xi,
\end{equation*}
with amplitude
\begin{equation}\label{eq:calt}
c(x,z,\xi)=\int_\Rn\int_\Rn e^{i(y-z)\cdot(\eta-\xi)}
a(x,y,\xi) p(y,\eta) dy \dslash\eta.
\end{equation}
Note that in comparison with representation 
(\ref{eq:tp})--(\ref{eq:c2}), 
the amplitude here depends on three variables, but
there is no entry of the phase. This allows to treat more general
phases and amplitudes, leading to the following:

\begin{theorem}[Composition $TP$]\label{prop:alt}
Let operator $T$ be defined by
\[
Tu(x)=\int_\Rn\int_\Rn e^{i(\phi(x,\xi)-y\cdot\xi)} a(x,y,\xi)
u(y)\,dy\,\dslash\xi.
\]
Let the phase $\phi=\phi(x,\xi)$ be any function and
$a=a(x,y,\xi)\in 
C^\infty(\Rone^{n}\times\Rn\times\Rn)$ satisfy
\begin{equation}\label{est:amp0}
|\partial_x^\alpha\partial_y^\beta\partial_\xi^\gamma
a(x,y,\xi)|\leq
C_{\alpha\beta\gamma}\bra{x}^{m_1}\bra{y}^{m_2}\bra{\xi}^{m_3},
\textrm{ for all } x,y,\xi\in\Rn,
\end{equation}
for all $\alpha, \beta, \gamma$. 
Let $p=p(y,\eta)\in C^\infty(\Rn\times\Rn)$ for all $\alpha,
\beta$ satisfy the estimate
\begin{equation}\label{est:p0}
|\partial_y^\alpha\partial_\eta^\beta p(y,\eta)|\leq
C_{\alpha\beta}\bra{y}^{t_1} \bra{\eta}^{t_2-|\beta|},
\textrm{ for all } y,\eta\in\Rn.
\end{equation}
Then the composition $B=T\circ P(x,D)$ is an operator of the form
\[
(TPu)(x)=\int_\Rn\int_\Rn e^{i(\phi(x,\xi)-z\cdot\xi)}
c(x,z,\xi) u(z) dz \dslash\xi,
\]
with amplitude $c(x,z,\xi)$ satisfying
\begin{equation}\label{est:comp0}
|\partial_x^\alpha\partial_z^\beta\partial_\xi^\gamma c(x,z,\xi)|
\leq C_{\alpha\beta\gamma}\bra{x}^{m_1}\bra{z}^{m_2+t_1}
\bra{\xi}^{m_3+t_2},
\end{equation}
for all $\alpha,\beta,\gamma$ and all $x,z,\xi\in\Rn$
Moreover, we have the asymptotic expansion, improving in $\xi$:
\[
c(x,z,\xi) \thicksim \sum_\alpha
\frac{i^{-|\alpha|}}{\alpha!}\partial_y^\alpha\left[ a(x,y,\xi)
\partial_\xi^\alpha p(y,\xi)\right]|_{y=z}.
\]
\end{theorem}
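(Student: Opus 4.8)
The starting point is the representation \eqref{eq:calt}, so that we must analyse the oscillatory integral
\[
c(x,z,\xi)=\int_\Rn\int_\Rn e^{i(y-z)\cdot(\eta-\xi)}
a(x,y,\xi)\,p(y,\eta)\,dy\,\dslash\eta .
\]
Since $x$ and $\xi$ enter only as parameters (through $a$), and since $\partial_x^\alpha$ and $\partial_\xi^\gamma$ may be distributed onto $a$ and onto the exponential $e^{i(y-z)\cdot(\eta-\xi)}$ without changing the structure of the integral (a $\partial_\xi$ falling on the phase produces a factor $(z-y)$, which must be reabsorbed by integration by parts in $\eta$, as explained below), it suffices to prove the bound \eqref{est:comp0} for $\alpha=\gamma=0$ and an arbitrary $z$-derivative $\partial_z^\beta$; the general case follows by the same argument with the orders of $a$ shifted. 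After changing variables $\eta\mapsto\eta-\xi$ and $y\mapsto y$ (shifting $\eta$ so that the phase becomes $(y-z)\cdot\eta$), one has an integral of the form $\int\int e^{i(y-z)\cdot\eta}\,a(x,y,\xi)\,p(y,\eta+\xi)\,dy\,\dslash\eta$, a standard oscillatory integral with phase $(y-z)\cdot\eta$, amplitude of order $m_2$ in $y$, order $t_2$ in $\eta$ (using $\jp{\eta+\xi}\lesssim\jp\eta\jp\xi$), and no growth lost in the exponent itself.

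The heart of the matter is therefore a Kumano-go--type estimate for the oscillatory integral $\int\int e^{i(y-z)\cdot\eta}\, b(y,\eta)\,dy\,\dslash\eta$. The plan is the usual two-sided integration by parts: using $e^{i(y-z)\cdot\eta}=\jp\eta^{-2N}(1-\Delta_y)^N e^{i(y-z)\cdot\eta}$ to integrate by parts in $y$, which gains decay $\jp\eta^{-2N}$ at the cost of differentiating $b$ in $y$ (harmless, since $y$-derivatives of our amplitude do not change its order in $y$), and dually $e^{i(y-z)\cdot\eta}=\jp{y-z}^{-2M}(1-\Delta_\eta)^M e^{i(y-z)\cdot\eta}$ to integrate by parts in $\eta$, which gains decay $\jp{y-z}^{-2M}$ at the cost of differentiating $b$ in $\eta$; each $\eta$-derivative lowers the $\eta$-order of $p$ by one, which is exactly where the ellipticity/decay in $\eta$ in \eqref{est:p0} is used. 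Choosing $N$ and $M$ large, the $\eta$-integral converges absolutely for $\jp\eta^{t_2-2N}$ with $2N>t_2+n$, and the $y$-integral is controlled by $\int\jp y^{m_2}\jp{y-z}^{-2M}dy\lesssim\jp z^{m_2}$ for $2M>m_2+n$ — this is the standard Peetre-inequality estimate that produces the claimed $\jp z^{m_2+t_1}$ growth (the extra $\jp z^{t_1}$ arising from the factor $\jp y^{t_1}\lesssim\jp z^{t_1}\jp{y-z}^{|t_1|}$, again absorbed by taking $M$ larger). Because the number of integrations by parts can be taken even, no sign bookkeeping is needed, as the authors note. Differentiating in $z$ simply brings down factors of $\eta$ under the integral, which are handled by increasing $N$.

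For the asymptotic expansion one Taylor-expands $p(y,\eta+\xi)$ — equivalently $p(y,\eta)$ about $\eta=\xi$ in the original variables — in the $\eta$-variable:
\[
p(y,\eta)=\sum_{|\alpha|<L}\frac{1}{\alpha!}\partial_\eta^\alpha p(y,\xi)\,(\eta-\xi)^\alpha + r_L(y,\eta,\xi).
\]
Substituting the polynomial part and using $(\eta-\xi)^\alpha e^{i(y-z)\cdot(\eta-\xi)}=D_y^\alpha e^{i(y-z)\cdot(\eta-\xi)}$, one integrates by parts in $y$, and then the $\eta$-integral reproduces the Dirac mass $\delta(y-z)$ (i.e.\ $\int e^{i(y-z)\cdot(\eta-\xi)}\dslash\eta = \delta(y-z)$ up to the shift), collapsing the $y$-integral to $y=z$ and giving exactly $\sum_{|\alpha|<L}\frac{i^{-|\alpha|}}{\alpha!}\partial_y^\alpha[a(x,y,\xi)\,\partial_\xi^\alpha p(y,\xi)]|_{y=z}$, with the derivative $\partial_\xi$ replacing $\partial_\eta$ after setting $\eta=\xi$. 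The remainder term, carrying the Taylor remainder $r_L$ whose $\eta$-order is $t_2-L$ with $L$ extra vanishing at $\eta=\xi$ (so effectively gaining $\jp\xi^{-L}$ after the same two-sided integration by parts), is estimated by the mechanism of the first part to satisfy \eqref{est:comp0} with $m_3+t_2$ replaced by $m_3+t_2-L$; letting $L\to\infty$ gives the asymptotic sum.

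The main obstacle is purely bookkeeping: keeping the growth in $x$, $z$, $\xi$ correctly balanced while the integrations by parts move derivatives around — in particular verifying that $x$- and $\xi$-derivatives of $c$ never improve the $\jp x^{m_1}$, $\jp\xi^{m_3+t_2}$ orders (because they only ever hit $a$, whose derivatives preserve its orders, or the exponential, which is then cured by further integration by parts in $\eta$) — and confirming that the remainder of the Taylor expansion genuinely gains a full power $\jp\xi^{-L}$ rather than just $\jp\xi^{-1}$, for which one writes $r_L$ in integral form $r_L=\sum_{|\alpha|=L}\frac{L}{\alpha!}(\eta-\xi)^\alpha\int_0^1(1-t)^{L-1}\partial_\eta^\alpha p(y,\xi+t(\eta-\xi))\,dt$ and tracks the $t$-uniform $\eta$-order carefully.
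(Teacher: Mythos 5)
Your overall plan follows the same skeleton as the paper's own argument: write $c$ via \eqref{eq:calt}, Taylor-expand $p$ in the second variable about $\eta=\xi$, recognize $(\eta-\xi)^\alpha e^{i(y-z)\cdot(\eta-\xi)}=D_y^\alpha e^{i(y-z)\cdot(\eta-\xi)}$, integrate by parts in $y$, and collapse the Fourier integral to $y=z$ to read off $c_\alpha=\frac{1}{\alpha!}D_y^\alpha[a\,\partial_\xi^\alpha p]|_{y=z}$. That much is right and matches the structure of Part 2 of the paper's proof. The boundedness estimate \eqref{est:comp0} via two-sided integration by parts and Peetre's inequality is also sound.

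The place where your sketch genuinely diverges from the paper, and where it is incomplete, is the remainder estimate. You assert that the remainder ``effectively gain[s] $\jp{\xi}^{-L}$ after the same two-sided integration by parts,'' and then flag at the end that confirming this ``rather than just $\jp\xi^{-1}$'' is the main obstacle. It is, and the issue is not merely bookkeeping. The integral remainder carries $\partial_\eta^\alpha p(y,\xi+t(\eta-\xi))$, whose pointwise bound $\jp{\xi+t(\eta-\xi)}^{t_2-L}$ does \emph{not} decay in $\xi$ uniformly in $\eta$: along the rays $\eta-\xi\approx -\xi/t$ it is $O(1)$. To convert this into the desired $\jp{\xi}^{t_2-L}$ decay you must either (a) split the $\theta=\eta-\xi$ integral into $|\theta|\lesssim\jp\xi$ (where $\jp{\xi+t\theta}\sim\jp\xi$) and $|\theta|\gtrsim\jp\xi$ (where you use the superpolynomial decay from the $y$-integrations by parts) --- this is exactly what the paper's cut-off $\rho(\theta/\jp\xi)$ does in Part 3 --- or (b) invoke Peetre, $\jp{\xi+t\theta}^{t_2-L}\leq C\jp\xi^{t_2-L}\jp\theta^{|t_2-L|}$, paying an extra $\jp\theta^{L-t_2}$ (worsening further under the $\theta$-integrations by parts) that must then be beaten by choosing $N$ depending on both $L$ and $M$. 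Either route closes, but your text names neither: ``track the $t$-uniform $\eta$-order carefully'' is an acknowledgment of the problem, not a resolution.

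Two smaller remarks. First, the paper also introduces a spatial cut-off $\chi(y,z)$ separating $|y-z|\gtrless k$, with the far region handled by a different integration-by-parts scheme ($^tL_\eta=-|y-z|^{-2}\Delta_\eta$ together with $^tL_y=\jp\xi^{-2}(1-\Delta_y)$); you dispense with this split, which is legitimate here since the phase plays no role in \eqref{eq:calt}, but it is worth knowing that the authors keep it because the same decomposition is reused, with nontrivial phase, in Lemma \ref{l:composition}. Second, when you write ``each $\eta$-derivative lowers the $\eta$-order of $p$ by one, which is exactly where the ellipticity/decay in $\eta$ in \eqref{est:p0} is used'' you should be careful: after the shift $\eta\mapsto\eta+\xi$, the decay of $p$ is in $\jp{\eta+\xi}$, not in $\jp\eta$, and it is precisely Peetre that reallocates it; this is the same subtlety as in the remainder, and should be stated explicitly rather than folded into ``standard Peetre-inequality estimate.''
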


\begin{defn}[{\bf Improving asymptotic expansions}]
In this paper, when we say that the asymptotic expansion
$a\thicksim \sum_{j=1}^\infty a_j$ is improving in $\xi$, 
it will always mean that
for every $\alpha, \beta, \gamma$ and $M$ there 
exist $N$ and $C$ such that
\begin{equation*}\label{eq:asympt0}
\left|\partial_x^\alpha\partial_y^\beta\partial_\xi^\gamma\left(
a-\sum_{j=1}^N a_j\right)\right|\leq C\jp{x}^{m_1}\jp{y}^{m_2}
\jp{\xi}^{m_3-M}, \textrm{ for all } x,y,\xi\in\Rn.
\end{equation*}
Similarly, we can define expansions improving in $x$ or in $y$.
\end{defn}

\begin{rem}\label{rem1}
It is clear that if $a$ has additional decay with respect to some variables,
so does the new amplitude.
For example, if we assume stronger estimate
\begin{equation*}\label{est:amp1}
|\partial_x^\alpha\partial_y^\beta\partial_\xi^\gamma
a(x,y,\xi)|\leq
C_{\alpha\beta\gamma}\bra{x}^{m_1-|\alpha|}\bra{y}^{m_2}\bra{\xi}^{m_3},
\textrm{ for all } x,y,\xi\in\Rn
\end{equation*}
instead of \eqref{est:amp0}, then
we can improve estimate \eqref{est:comp0} as
\begin{equation}\label{est:comp1}
|\partial_x^\alpha\partial_z^\beta\partial_\xi^\gamma c(x,z,\xi)|
\leq C_{\alpha\beta\gamma}\bra{x}^{m_1-|\alpha|}\bra{z}^{m_2+t_1}
\bra{\xi}^{m_3+t_2}.
\end{equation}
Similarly, if we assume
\begin{equation}\label{est:amp2}
|\partial_x^\alpha\partial_y^\beta\partial_\xi^\gamma
a(x,y,\xi)|\leq
C_{\alpha\beta\gamma}\bra{x}^{m_1}\bra{y}^{m_2-|\beta|}\bra{\xi}^{m_3},
\textrm{ for all } x,y,\xi\in\Rn
\end{equation}
and
\begin{equation}\label{est:p2}
|\partial_y^\alpha\partial_\eta^\beta p(y,\eta)|\leq
C_{\alpha\beta}\bra{y}^{t_1-|\alpha|} \bra{\eta}^{t_2-|\beta|}
\textrm{ for all } y,\eta\in\Rn
\end{equation}
instead of estimates \eqref{est:amp0} and \eqref{est:p0}, then
we can improve estimate \eqref{est:comp0} as
\begin{equation*}\label{est:comp2}
|\partial_x^\alpha\partial_z^\beta\partial_\xi^\gamma c(x,z,\xi)|
\leq C_{\alpha\beta\gamma}\bra{x}^{m_1}\bra{z}^{m_2+t_1-|\beta|}
\bra{\xi}^{m_3+t_2}.
\end{equation*}
In fact, from the expression \eqref{eq:calt}, we obtain
\[
\jp{x}^{\alpha}\partial_x^\alpha c(x,z,\xi)
=\int_\Rn\int_\Rn e^{i(y-z)\cdot(\eta-\xi)}
\jp{x}^{\alpha}(\partial_x^\alpha a)(x,y,\xi) p(y,\eta) dy \dslash\eta,
\]
and the conclusion \eqref{est:comp1} follows from \eqref{est:comp0}
if we regard
$\jp{x}^{\alpha}(\partial_x^\alpha a)(x,y,\xi)$
as $a(x,y,\xi)$ in \eqref{est:amp0}.
The same argument is true for the case of assumptions
\eqref{est:amp2} and \eqref{est:p2} since we obtain
\begin{align*}
\partial_{z_j}c(x,z,\xi)
=&\int_\Rn\int_\Rn e^{i(y-z)\cdot(\eta-\xi)}
(\partial_{y_j}a)(x,y,\xi) p(y,\eta) dy \dslash\eta
\\
+&
\int_\Rn\int_\Rn e^{i(y-z)\cdot(\eta-\xi)}
a(x,y,\xi) (\partial_{y_j}p)(y,\eta) dy \dslash\eta,
\\
z_jc(x,z,\xi)
=&\int_\Rn\int_\Rn e^{i(y-z)\cdot(\eta-\xi)}
y_ja(x,y,\xi) p(y,\eta) dy \dslash\eta
\\
+&
\int_\Rn\int_\Rn e^{i(y-z)\cdot(\eta-\xi)}
a(x,y,\xi) (\partial_{\eta_j}p)(y,\eta) dy \dslash\eta,
\end{align*}
from the expression \eqref{eq:calt} again.
From this we can conclude estimate \eqref{est:comp1} 
similar to the argument before.
\end{rem}

\begin{proof}[Proof of Theorem \ref{prop:alt}]
Let $\chi=\chi(y,z)\in C^\infty(\Rone^{n}\times\Rn)$ be such that
$\chi(y,z)=1$ for $|y-z|< k/2$, and $\chi(y,z)=0$ for $|y-z|>k,$
for some small $0<k<1$ to be chosen later.
Using this cut-off function, we decompose $c=c^I+c^{II}$, where
$c^I$ and $c^{II}$ are of the form (\ref{eq:calt}), but
with amplitude $a$ replaced by amplitudes
$a^I=(1-\chi)a$ and $a^{II}=\chi a$, respectively.

{1. Estimate for $|y-z|\geq k.$} We integrate by parts with transposes
of operators $^tL_\eta=-|y-z|^{-2}\Delta_\eta$ and
$^tL_y=\bra{\xi}^{-2}(1-\Delta_y)$, for which we have
$$(^tL_\eta)^{N_1}e^{i(y-z)\cdot(\eta-\xi)}=e^{i(y-z)
\cdot(\eta-\xi)},
\quad (^tL_y)^{N_1}e^{-i(y-z)\cdot\xi}=e^{-i(y-z)\cdot\xi}.$$ 
Thus, we obtain
\[
\begin{aligned}
c^I(x,z,\xi) & = \iint e^{-i(y-z)\cdot\xi} L_y^{N_2}
 \left[
 e^{i(y-z)\cdot\eta} a^I(x,y,\xi)
 L_\eta^{N_1}p(y,\eta)\right] dy \dslash\eta \\
& =  \iint e^{i(y-z)\cdot(\eta-\xi)} e^{-i(y-z)\cdot\eta}
L_y^{N_2} \left[
 e^{i(y-z)\cdot\eta} a^I(x,y,\xi)
 L_\eta^{N_1}p(y,\eta)\right] dy \dslash\eta \\
& =  \iint e^{-iz\cdot(\eta-\xi)}
\frac{(1-\Delta_\eta)^{N_3}}{\bra{z}^{2N_3}}\times \\
& \quad\quad \times \left[
e^{iy\cdot(\eta-\xi)} e^{-i(y-z)\cdot\eta} L_y^{N_2} \left[
 e^{i(y-z)\cdot\eta} a^I(x,y,\xi)
 L_\eta^{N_1}p(y,\eta)\right]\right] dy \dslash\eta.
\end{aligned}
\]
Now, if we use that after all differentiations,
$e^{i(y-z)\cdot\eta}$ in the last integral
cancels, we get
\[
\begin{aligned}
|c^I(x,z,\xi)| & \leq C\iint_{|y-z|>k} \bra{z}^{-2N_3}
\bra{y}^{2N_3}\bra{\xi}^{-2N_2}
 \bra{\eta}^{2N_2}
\bra{x}^{m_1} \bra{y}^{m_2} \bra{\xi}^{m_3}\times \\
 & \quad \quad \times |y-z|^{-2N_1} \bra{y}^{t_1}
\bra{\eta}^{t_2-2N_1} dy d\eta \\ &
\leq C \bra{x}^{m_1} \bra{z}^{-2N_3} \bra{\xi}^{m_3-2N_2}
\int_{|y-z|>k} \frac{\bra{y}^{2N_3+m_2+t_1}}{|y-z|^{2N_1}} dy
\int \bra{\eta}^{2N_2+t_2-2N_1} d\eta \\
& \leq C \bra{x}^{m_1} \bra{z}^{-2N_3} \bra{\xi}^{m_3-2N_2}
\int_{|y|>k} \frac{\bra{y+z}^{2N_3+m_2+t_1}}{|y|^{2N_1}} dy \\
&  \leq C \bra{x}^{m_1} \bra{z}^{m_2+t_1} \bra{\xi}^{m_3-2N_2}
\int_{|y|>k} \bra{y}^{2N_3+m_2+t_1-2N_1} dy, \\
\end{aligned}
\]
where we have used the estimate $\bra{y+z}\leq C\bra{y}\bra{z}$
for $N_3$ such that $2N_3+m_2+t_1>0$. This gives the desired
estimate for $c^I$ for $N_1\gg N_3, N_2$, and $N_2>m_3$.

For the estimate of derivatives of $c^I(x,z,\xi)$ we first note
that derivatives with respect to $x$ do not have any effect on our
estimates. Derivatives with respect to $z$ introduce the factor of
$\eta-\xi$, which is taken care of by choosing large $N_1, N_2$.
Derivatives with respect to $\xi$ may introduce the factor of
$y-z$, which is taken care of by taking large $N_1$.

{2. Estimate for $|y-z|<k.$} We set $\eta=\xi+\theta$, so the
Taylor expansion of $p$ with respect to $\eta$ gives
\[
\begin{aligned}
p(y,\xi+\theta) & = \sum_{|\alpha|<M} \frac{\theta^\alpha}{\alpha
!}
\partial_\xi^\alpha p(y,\xi) + \sum_{|\alpha|=M}
C_\alpha {\theta^\alpha} r_\alpha(y,\xi,\theta), \\
r_\alpha(y,\xi,\theta) & = \int_0^1 (1-t)^M
\partial_\xi^{\alpha}p(y,\xi+t\theta) dt.
\end{aligned}
\]
Then
\[
\begin{aligned}
c^{II}(x,z,\xi) & = \iint e^{(y-z)\cdot\theta} \chi(y,z)
a(x,y,\xi)
p(y,\xi+\theta) dy \dslash\theta \\
& = \sum_{|\alpha|<M} \frac{1}{\alpha !} c_\alpha(x,z,\xi) +
 \sum_{|\alpha|=M}
 C_\alpha R_\alpha(x,z,\xi),
\end{aligned}
\]
where
\[
\begin{aligned}
R_\alpha(x,z,\xi) & = \iint e^{i(y-z)\cdot\theta}
\theta^\alpha \chi(y,z) a(x,y,\xi) r_\alpha(y,\xi,\theta) dy 
\dslash\theta, \\
c_\alpha(x,z,\xi) & = \iint e^{(y-z)\cdot\theta}\theta^\alpha
\left[ \chi(y,z) a(x,y,\xi) \partial_\xi^\alpha p(y,\xi) \right] 
dy \dslash\theta \\
& = D_y^\alpha\left[ \chi(y,z) a(x,y,\xi)
\partial_\xi^\alpha p(y,\xi)\right]|_{y=z}.
\end{aligned}
\]
From this we readily obtain
\[
|c_\alpha(x,z,\xi)| \leq \bra{x}^{m_1} \bra{z}^{m_2+t_1}
\bra{\xi}^{m_3+t_2-|\alpha|}.
\]
It is easy to see that derivatives of $c_\alpha$ satisfy the same
estimates. For the asymptotic formula we note that
$\partial_y^\alpha\chi(y,z)|_{y=z}=0$ for all $\alpha\not=0$,
so all derivatives must fall on the product
$a\, \partial_\xi^\alpha p$.
Finally, it is easy to see that derivatives of $c_\alpha$ satisfy
similar estimates as $c_\alpha$.

{3. Estimate for the remainder $R_\alpha$.} Let $\rho\in
C_0^\infty(\Rn)$ be such that $0\leq\rho\leq 1$, $\rho(x)=1$ for
$|x|<\epsilon/2$ and $\rho(x)=0$ for $|x|>\epsilon$,
for some $\epsilon>0$ to be chosen later. We decompose
\[
\begin{aligned}
R_\alpha(x,z,\xi) & = R_\alpha^I(x,z,\xi)+R_\alpha^{II}(x,z,\xi) \\
 & = \iint e^{i(y-z)\cdot\theta}
\theta^\alpha \rho\left(\frac{\theta}{\bra{\xi}}
\right) \chi(y,z) a(x,y,\xi)
r_\alpha(y,\xi,\theta) dy \dslash\theta \\ & + \iint
e^{i(y-z)\cdot\theta} \theta^\alpha
\left(1-\rho\left(\frac{\theta}{\bra{\xi}}\right)\right) 
\chi(y,z) a(x,y,\xi)
r_\alpha(y,\xi,\theta) dy \dslash\theta.
\end{aligned}
\]

{3.1. Estimate for $|\theta|\leq \epsilon\bra{\xi}.$} For
sufficiently small $\epsilon>0$, for any $0\leq t\leq 1$,
$\bra{\xi+t\theta}$ and $\bra{\xi}$ are equivalent, which means
that $|\partial_y^\beta r_\alpha(y,\xi,\theta|\leq C\bra{y}^{t_1}
\bra{\xi}^{t_2-|\alpha|}$, for $|\theta|\leq\epsilon\bra{\xi}$,
and all $\alpha, \beta$.
We have
\[
\begin{aligned}
R_\alpha^I(x,z,\xi) & = \iint e^{i(y-z)\cdot\theta}
D_y^\alpha\left[ \rho\left(\frac{\theta}{\bra{\xi}}\right) \chi(y,z)
a(x,y,\xi) r_\alpha(y,\xi,\theta) \right] dy \dslash\theta, \\
|R_\alpha^I(x,z,\xi)| & \leq C \bra{x}^{m_1}
\bra{\xi}^{m_3+t_2-|\alpha|} \int_{|\theta|<\epsilon\bra{\xi}}
d\theta \int_{|y-z|<k} \bra{y}^{m_2+t_1} dy \\ & \leq C
\bra{x}^{m_1} \bra{\xi}^{m_3+t_2-|\alpha|+n} \bra{z}^{m_2+t_1},
\end{aligned}
\]
which yields estimate (\ref{eq:cest}) for $M=|\alpha|\geq n$ and
large $z$.

{3.2. Estimate for $|\theta|> \epsilon\bra{\xi}.$} Here we will
use that fact that for $M=|\alpha|\geq t_2,$ the function
$r_\alpha(y,\xi,\theta)$ and its derivatives can be estimated by
$\bra{y}^{t_1}.$ We will integrate by parts with operator
$L_y=-|\theta|^{-2}\Delta_y,$ for which we have $L^N
e^{i(y-z)\cdot\theta}=e^{i(y-z)\cdot\theta}.$ We have
\[
\begin{aligned}
R_\alpha^{II}(x,z,\xi) & = \iint e^{i(y-z)\cdot\theta}
D_y^\alpha\left[ \left(1-\rho\left(\frac{\theta}{\bra{\xi}}
\right)\right) \chi(y,z)
a(x,y,\xi) r_\alpha(y,\xi,\theta)
\right] dy \dslash\theta \\
& = \iint e^{i(y-z)\cdot\theta} |\theta|^{-2N} \Delta_y^N
D_y^\alpha \\
& \qquad \left[ \left(1-\rho\left(\frac{\theta}{\bra{\xi}}
\right)\right) \chi(y,z)
a(x,y,\xi) r_\alpha(y,\xi,\theta) \right] dy \dslash\theta,
\end{aligned}
\]
which implies
\[
\begin{aligned}
 |R_\alpha^{II}(x,z,\xi)| & \leq C
\bra{x}^{m_1} \bra{\xi}^{m_3} \int_{|\theta|>\epsilon\bra{\xi}}
|\theta|^{-2N} d\theta \int_{|y-z|<k} \bra{y}^{m_2+t_1} dy \\ &
\leq C \bra{x}^{m_1} \bra{\xi}^{m_3+n-2N} \bra{z}^{m_2+t_1}.
\end{aligned}
\]
Derivatives of $R_\alpha^I, R_\alpha^{II}$ with respect to $z$ and
$\xi$ may introduce additional powers of $\theta$, which amounts
to taking more $y$-derivatives under the integral and does not
change the estimates. This completes the proof of Theorem
\ref{prop:alt}.
\end{proof}

Now we will prove a lemma which will be used in deriving estimates
for amplitudes in compositions $PT$ and $TP$ in
(\ref{eq:pt})--(\ref{eq:c2}). For this purpose, we introduce
amplitudes that may depend on four sets of variables, and
we will be able to choose different sets when dealing with
different composition formulae in the sequel.

\begin{lemma}\label{l:composition}
Let $\psi=\psi(x,\xi)\in C^\infty(\Rn\times\Rn)$ be such that
\begin{equation}\label{eq:psias1}
C_1\bra{\xi}\leq \bra{\nabla_x\psi(x,\xi)}\leq C_2\bra{\xi},
\textrm{ for all } x,\xi\in\Rn,
\end{equation}
for some $C_1,C_2>0$, and such that for all $|\alpha|, |\beta|\geq
1$ we have
\begin{equation}\label{eq:psias2}
|\partial_x^\alpha\psi(x,\xi)|\leq C_\alpha\bra{\xi}, \;\;
|\partial_x^\alpha\partial_\xi^\beta\psi(x,\xi)|\leq
C_{\alpha\beta},\textrm{ for all } x,\xi\in\Rn.
\end{equation}
Let $b=b(y,\xi,z,\eta)\in C^\infty(\Rone^{4n})$ satisfy
\begin{equation*}\label{eq:blong}
|\partial_y^\alpha\partial_\xi^\beta\partial_z^\gamma\partial_\eta^\delta
b(y,\xi,z,\eta)|\leq C_{\alpha\beta\gamma\delta}\bra{y}^{m_1}
\bra{\xi}^{m_2} \bra{z}^{m_3} \bra{\eta}^{m_4-|\delta|},
\end{equation*}
for all $\alpha, \beta, \gamma, \delta$, and all
$y,\xi,z,\eta\in\Rn$. Let $q=q(x,\eta)\in C^\infty(\Rn\times\Rn)$
for all $\alpha, \beta$ satisfy
\[
|\partial_x^\alpha\partial_\eta^\beta q(x,\eta)|\leq
C_{\alpha\beta}\bra{x}^{t_1} \bra{\eta}^{t_2-|\beta|},
\textrm{ for all } x,\eta\in\Rn.
\]
Then the function
\begin{equation*}
c(x,z,\xi)=\int_\Rn\int_\Rn
e^{i(\psi(y,\xi)-\psi(x,\xi)+(x-y)\cdot\eta)} b(y,\xi,z,\eta)
q(x,\eta) dy \dslash\eta
\end{equation*}
satisfies
\begin{equation}\label{eq:cest}
|\partial_x^\alpha\partial_z^\beta\partial_\xi^\gamma
c(x,z,\xi)|\leq C_{\alpha\beta\gamma}\bra{x}^{m_1+t_1}
 \bra{z}^{m_3}\bra{\xi}^{m_2+t_2+m_4},
\end{equation}
for all $\alpha, \beta, \gamma$, and all $x,z,\xi\in\Rn$.
Moreover, we have the asymptotic expansion, improving in $\xi$:
\begin{equation}\label{eq:asympt}
c(x,z,\xi) \thicksim \sum_{\alpha,\beta}
\frac{i^{-(|\alpha|+|\beta|)}}{\alpha!\beta!}
\partial_\xi^\alpha q(x,\nabla_x\psi(x,\xi))
\partial_y^{\alpha+\beta}\left[
e^{i\Psi(x,y,\xi)}\partial_\eta^\beta
b(y,\xi,z,\nabla_x\psi(x,\xi)) \right]|_{y=x},
\end{equation}
where
$\Psi(x,y,\xi)=\psi(y,\xi)-\psi(x,\xi)+(x-y)\cdot\nabla_x\psi(x,\xi).$
\end{lemma}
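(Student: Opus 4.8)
The plan is to follow the same two-region scheme as in the proof of Theorem \ref{prop:alt}: split the $y$-integration by a cutoff $\chi(x,y)$ into the region $|x-y|\geq k$ (where we gain decay by repeated integration by parts in $\eta$ using that $|x-y|^{-2}\Delta_\eta$ reproduces the oscillatory factor) and the region $|x-y|<k$ (where we Taylor-expand and extract the asymptotic series). The essential new feature compared to Theorem \ref{prop:alt} is that the phase $\Psi(x,y,\xi)=\psi(y,\xi)-\psi(x,\xi)+(x-y)\cdot\eta$ now genuinely enters, so one must track how $\nabla_x\psi$ behaves. First I would record the consequences of \eqref{eq:psias1}–\eqref{eq:psias2}: $\nabla_x\psi(x,\xi)$ is of size $\bra{\xi}$, its $x$-derivatives are $O(\bra{\xi})$, and all mixed $x,\xi$-derivatives are bounded; hence $\bra{\nabla_x\psi(x,\xi)}\sim\bra{\xi}$, and $q(x,\nabla_x\psi(x,\xi))$ together with its derivatives obeys the symbol estimates $\bra{x}^{t_1}\bra{\xi}^{t_2}$ with the expected loss in $\xi$ under $\xi$-differentiation. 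I would also note that the term $(x-y)\cdot\eta$ in $\Psi$ lets us integrate by parts in $\eta$ freely.

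For the far region $|x-y|\geq k$, I would integrate by parts $N_1$ times with ${}^{t}L_\eta=-|x-y|^{-2}\Delta_\eta$ to produce a factor $|x-y|^{-2N_1}$ and to lower the $\eta$-order of $q$ by $2N_1$ (from $\bra{\eta}^{t_2}$ down), then integrate by parts in $y$ using $\bra{\xi}^{-2}(1-\Delta_y)$ to gain $\bra{\xi}^{-2N_2}$ (derivatives falling on $e^{i(\psi(y,\xi)-\psi(x,\xi))}$ cost at most $\bra{\xi}$ each by \eqref{eq:psias2}, and this is the only place where the phase assumptions are really used, so the bookkeeping must be done carefully), and finally integrate by parts in $\eta$ once more with $\bra{x}^{-2}(1-\Delta_\eta)$-type operators to trade $\bra{x}$ growth from $b$ against $\bra{\eta}$, exactly as in Theorem \ref{prop:alt}. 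Combining the resulting powers of $\bra{x},\bra{y},\bra{z},\bra{\eta},\bra{\xi}$ and using $\bra{y+x}\leq C\bra{y}\bra{x}$ to move the $\bra{y}^{m_1}$-type growth onto $\bra{x}$, and choosing $N_1\gg N_2,N_3$ with $N_2$ large, gives the bound $\bra{x}^{m_1+t_1}\bra{z}^{m_3}\bra{\xi}^{m_2+t_2+m_4-2N_2}$, which is negligible; the $x,z,\xi$-derivatives only introduce factors of $\eta-\nabla_x\psi$ or $x-y$ that are absorbed by enlarging $N_1,N_2$.

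For the near region $|x-y|<k$, substitute $\eta=\nabla_x\psi(x,\xi)+\theta$; then $\Psi$ becomes $\psi(y,\xi)-\psi(x,\xi)+(x-y)\cdot\nabla_x\psi(x,\xi)+(x-y)\cdot\theta$, where the first three terms, call them $\Psi(x,y,\xi)$, vanish to second order at $y=x$. Taylor-expanding $q(x,\nabla_x\psi(x,\xi)+\theta)$ in $\theta$ and the factor $e^{i\Psi(x,y,\xi)}$ together with $b$ in $\theta$, and then using that $e^{i(x-y)\cdot\theta}\theta^\mu$ under the $y$-integral acts as $D_y^\mu$ evaluated at $y=x$, produces the stated double series in $\alpha$ (from $q$) and $\beta$ (from $b$), with the $e^{i\Psi}$ factor carried along; since $\partial_y^\gamma\Psi|_{y=x}=0$ for $|\gamma|\leq 1$, each surviving term $\partial_\xi^\alpha q(x,\nabla_x\psi)\,\partial_y^{\alpha+\beta}[e^{i\Psi}\partial_\eta^\beta b]|_{y=x}$ has $\xi$-order reduced by $|\alpha|+|\beta|$ relative to the top term $q(x,\nabla_x\psi(x,\xi))\,b(x,\xi,z,\nabla_x\psi(x,\xi))$, giving the ``improving in $\xi$'' property. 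The main obstacle, and where I would spend the most care, is the remainder estimate: as in Theorem \ref{prop:alt} one splits the $\theta$-integral at $|\theta|\lesssim\bra{\xi}$ (where $\bra{\nabla_x\psi(x,\xi)+s\theta}\sim\bra{\xi}$, so the integral Taylor remainder for $q$ still obeys symbol estimates with the right $\xi$-loss) and $|\theta|\gtrsim\bra{\xi}$ (where one integrates by parts with $-|\theta|^{-2}\Delta_y$ many times, exploiting that $e^{i\Psi(x,y,\xi)}$ and its $y$-derivatives are bounded on $|x-y|<k$ by \eqref{eq:psias2}, to gain arbitrarily many powers of $|\theta|^{-1}$); the one genuinely new point is checking that differentiating $e^{i\Psi(x,y,\xi)}$ in $y$ stays bounded uniformly in $\xi$ on the small ball $|x-y|<k$, which follows from $\partial_x^\alpha\partial_\xi^\beta\psi=O(1)$ for $|\beta|\geq1$ and a second-order Taylor estimate $|\Psi(x,y,\xi)|\leq C|x-y|^2\bra{\xi}$ combined with $|x-y|<k$, $k$ small. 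Putting the near- and far-region bounds together, together with the derivative estimates (which, as in Theorem \ref{prop:alt}, cost only extra powers of $\theta$ or $\eta-\nabla_x\psi$ that are reabsorbed), yields \eqref{eq:cest} and the expansion \eqref{eq:asympt}.
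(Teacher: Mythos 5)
Your overall strategy --- cutoff into $|x-y|\geq k$ and $|x-y|<k$, substitution $\eta=\nabla_x\psi(x,\xi)+\theta$, simultaneous Taylor expansion of $q$ and $b$, and a two-case $\theta$-split for the remainder --- is exactly the paper's. However, there are three places where the technical execution as you have described it would not close, and they are precisely the places where the paper has to work harder.

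First, the assertion that ``$e^{i\Psi(x,y,\xi)}$ and its $y$-derivatives are bounded on $|x-y|<k$ by \eqref{eq:psias2}'' is false. Since $\nabla_y\Psi=\nabla_y\psi(y,\xi)-\nabla_x\psi(x,\xi)$ is of size $|y-x|\bra{\xi}$, the correct estimate (which the paper proves and uses) is
$|\partial_y^{\gamma}e^{i\Psi(x,y,\xi)}|\leq C_{\gamma}(1+|y-x|\bra{\xi})^{|\gamma|}\bra{\xi}^{|\gamma|/2}$.
Even at $y=x$ one only gets $\bra{\xi}^{|\gamma|/2}$ (from second-order derivatives of $\psi$, which are $O(\bra{\xi})$, not $O(1)$), and away from $y=x$ the factor $(|y-x|\bra{\xi})^{|\gamma|}$ is large. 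Your $|\Psi|\leq C|x-y|^2\bra{\xi}$ bound controls the undifferentiated exponential, not its $y$-derivatives.

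Second, this breaks your large-$\theta$ remainder argument. Integrating by parts with $-|\theta|^{-2}\Delta_y$ (which reproduces only $e^{i(x-y)\cdot\theta}$) makes the $y$-derivatives fall on $e^{i\Psi(x,y,\xi)}$, costing $O(\bra{\xi})$ each; on the set $|\theta|>\epsilon\bra{\xi}$ the gain $|\theta|^{-2N}$ is therefore eaten up to a constant, and you never get the decay in $\theta$ needed to make $\int_{|\theta|>\epsilon\bra{\xi}}$ converge. The paper avoids this by absorbing $\Psi$ into the full phase $\omega(x,y,\xi,\theta)=(x-y)\cdot\theta+\Psi(x,y,\xi)$ and integrating by parts with the first-order operator $^tL_y=i|\nabla_y\omega|^{-2}(\nabla_y\omega)\cdot\nabla_y$, using that $|\nabla_y\omega|\geq C(|\theta|+\bra{\xi})$ on $\supp\chi$ when $k$ is small; then the exponential is reproduced and no extra $\bra{\xi}$'s appear. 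Similarly, in the small-$\theta$ remainder the phase growth $(1+|y-x|\bra{\xi})^{|\alpha|+|\beta|}$ must be compensated by further integration by parts in $\theta$ with $L_\theta=(1-\bra{\xi}^2\Delta_\theta)/(1+\bra{\xi}^2|x-y|^2)$, a step your sketch omits.

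Third, in the far region the operator $\bra{\xi}^{-2}(1-\Delta_y)$ does not reproduce $e^{i\psi(y,\xi)}$, and if one nevertheless applies $(1-\Delta_y)^{N_2}$ to the integrand, each $y$-derivative of $e^{i\psi}$ costs $\bra{\xi}$, so the prefactor $\bra{\xi}^{-2N_2}$ is entirely cancelled and there is no gain. The paper uses the operator $^tL_y=(1-\Delta_y)/(\bra{\nabla_y\psi(y,\xi)}^2-i\Delta_y\psi(y,\xi))$, which reproduces $e^{i\psi(y,\xi)}$ exactly; the denominator is $\gtrsim\bra{\xi}^2$ by the lower bound in \eqref{eq:psias1}, so a genuine gain of $\bra{\xi}^{-2N_2}$ results. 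You flag ``the bookkeeping must be done carefully'' but the issue is not bookkeeping --- a different operator is needed.

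In short, the skeleton is right, but in all three places where the phase $\psi$ actually interacts with the estimates you have assumed it behaves more tamely than it does, and this is precisely where the proof lives.
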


\begin{proof}
Let $\chi=\chi(x,y)\in C^\infty(\Rn\times\Rn)$ be such that
$0\leq\chi\leq 1$,
$\chi(x,y)=1$ for $|x-y|<k/2$ and $\chi(x,y)=0$ for $|x-y|>k$.
Let
\[
\begin{aligned}
c(x,z,\xi) & = c^I(x,z,\xi)+c^{II}(x,z,\xi) \\
& = \iint e^{i(\psi(y,\xi)-\psi(x,\xi)+(x-y)\cdot\eta)}
(1-\chi(x,y)) b(y,\xi,z,\eta) q(x,\eta) dy \dslash\eta \\
& + \iint e^{i(\psi(y,\xi)-\psi(x,\xi)+(x-y)\cdot\eta)} \chi(x,y)
b(y,\xi,z,\eta) q(x,\eta) dy \dslash\eta.
\end{aligned}
\]

{1. Estimate for $|x-y|\geq k$.} Here we will integrate by parts with
two operators,
\[
^tL_\eta=-i\sum_{j=1}^n \frac{x_j-y_j}{|x-y|^2}
\partial_{\eta_j}, \;\;
^tL_y=\frac{1-\Delta_y}{\bra{\nabla_y\psi(y,\xi)}^2-
i\Delta_y\psi(y,\xi)},
\]
so that we have
$L_\eta^{N_1}e^{i(x-y)\cdot\eta}=e^{i(x-y)\cdot\eta},$
$L_y^{N_2}e^{i\psi(y,\xi)}=e^{i\psi(y,\xi)}.$ Note that in view of
our assumption (\ref{eq:psias1}) on $\psi$, we have
\[
|\bra{\nabla_y\psi(y,\xi)}^2-i\Delta_y\psi(y,\xi)|\geq
|\bra{\nabla_y\psi(y,\xi)}|^2\geq C_1\bra{\xi}^2.
\]
Therefore,
\[
c^{I}(x,z,\xi) = \iint e^{i(\psi(y,\xi)-\psi(x,\xi)+x\cdot\eta))}
L_y^{N_2}\left( e^{-iy\cdot\eta} L_\eta^{N_1}\left[
(1-\chi(x,y))b(y,\xi,z,\eta) q(x,\eta) \right] \right) dy 
\dslash\eta.
\]
Further, we integrate by parts
with operator $(1+|x|^2)^{-1}(1-\Delta_\eta)$, for which
$(1+|x|^2)^{-1}(1-\Delta_\eta)e^{ix\cdot\eta}= e^{ix\cdot\eta}.$
This gives
\[
\begin{aligned}
c^{I}(x,z,\xi)=\iint e^{ix\cdot\eta}
e^{i(\psi(y,\xi)-\psi(x,\xi))} \bra{x}^{-2N_3}
(1-\Delta_\eta)^{N_3}
L_y^{N_2} \\ 
\left( e^{-iy\cdot\eta}
L_\eta^{N_1}\left[
(1-\chi(x,y))b(y,\xi,z,\eta) q(x,\eta)
\right]\right) dy \dslash\eta.
\end{aligned}
\]
Since $N_1$ $\eta$-derivatives of $b(y,\xi,z,\eta)q(x,\eta)$ in
view of our assumptions give an improvement in $\eta$ by
$\bra{\eta}^{-N_1}$, we get
\begin{equation}\label{EQ:est-cI}
\begin{aligned}
|c^{I}(x,z,\xi)| & \leq  \int \int_{|x-y|>k} \bra{x}^{-2N_3}
\bra{y}^{2N_3} \bra{\xi}^{-2N_2}
\bra{\eta}^{2N_2} \\
& \;\;\;\;\; |x-y|^{-2N_1}
\bra{y}^{m_1}\bra{\xi}^{m_2}\bra{z}^{m_3}\bra{x}^{t_1}
\bra{\eta}^{m_4+t_2-N_1} dy d\eta \\
& \leq  C \bra{z}^{m_3} \bra{x}^{t_1-2N_3} \bra{\xi}^{m_2-2N_2}
\int \bra{\eta}^{2N_2+m_4+t_2-N_1} d\eta
\int_{|x-y|>k} \frac{\bra{y}^{m_1+2N_3}}{|x-y|^{2N_1}} dy \\
& \leq  C \bra{z}^{m_3} \bra{x}^{t_1-2N_3} \bra{\xi}^{m_2-2N_2}
\int_{|y|>k} \frac{\bra{y+x}^{m_1+2N_3}}{|y|^{2N_1}} dy \\
& \leq  C \bra{z}^{m_3} \bra{x}^{t_1+m_1} \bra{\xi}^{m_2-2N_2},
\end{aligned}
\end{equation}
where we used $\bra{y+x}\leq C\bra{y}\bra{x}$ with $N_3$ such that
$m_1+2N_3>0$, and $2N_1> m_1+2N_3+n, 2N_2+m_4+t_2+n$, to make both
integrals finite.

Derivatives with respect to $z$ do not change estimates.
Derivatives with respect to $x$ and $\xi$ may introduce factors
estimated by powers of $\bra{\xi}, \bra{\eta}$ 
and $|y-x|$, which can be taken care of
by choosing large $N_1, N_2.$

{2. Estimate for $|x-y|<k.$} We will make a Taylor expansion of
$q(x,\eta)$ and $b(y,\xi,z,\eta)$ in
$\eta=\nabla_x\psi(x,\xi)+\theta$:
\begin{equation}\label{EQ:ralpha}
\begin{aligned}
q(x,\eta) & = \sum_{|\alpha|<M_1} \frac{\theta^\alpha}{\alpha !}
\partial_\xi^\alpha q(x,\nabla_x\psi(x,\xi)) + \sum_{|\alpha|=M_1}
C_\alpha {\theta^\alpha} r_\alpha(x,\xi,\theta), \\
b(y,\xi,z,\eta) & = \sum_{|\beta|<M_2} \frac{\theta^\beta}{\beta!}
\partial_\eta^\beta b(y,\xi,z,\nabla_x\psi(x,\xi)) + \sum_{|\beta|=M_2}
C_\beta {\theta^\beta} s_\beta(y,\xi,z,\theta), \\
r_\alpha(x,\xi,\theta) & = \int_0^1 (1-t)^{M_1}
\partial_\xi^{\alpha} q(x,\nabla_x\psi(x,\xi)+t\theta) dt, \\
s_\beta(y,\xi,z,\theta) & = \int_0^1 (1-t)^{M_2}
\partial_\xi^{\beta}b(y,\xi,z,\nabla_x\psi(x,\xi)+t\theta) dt.
\end{aligned}
\end{equation}
Let
\[
\Psi(x,y,\xi)=\psi(y,\xi)-\psi(x,\xi)+(x-y)\cdot\nabla_x\psi(x,\xi).
\]
Then
\[
\begin{aligned}
c^{II}(x,z,\xi) = \sum_{|\alpha|<M_1, |\beta|<M_2}
\frac{1}{\alpha !\beta !} c_{\alpha\beta}(x,z,\xi) +
 \sum_{|\alpha|=M_1, |\beta|=M_2}
C_\alpha C_\beta R_{\alpha\beta}(x,z,\xi),
\end{aligned}
\]
where
\[
\begin{aligned}
R_{\alpha\beta}(x,z,\xi) & = \iint e^{i(x-y)\cdot\theta}
e^{i\Psi(x,y,\xi)} \theta^{\alpha+\beta} \chi(x,y)
s_\beta(y,\xi,z,\theta) r_\alpha(x,\xi,\theta) dy 
\dslash\theta, \\
c_{\alpha\beta}(x,z,\xi) & = \iint e^{i(x-y)\cdot\theta}
\theta^{\alpha+\beta}\times \\
& \quad\quad \times\left[ e^{i\Psi(x,y,\xi)}\chi(x,y)
\partial_\eta^\beta b(y,\xi,z,\nabla_x\psi(x,\xi))
\partial_\xi^\alpha
q(x,\nabla_x\psi(x,\xi)) \right] dy \dslash\theta \\
& =
\partial_\xi^\alpha q(x,\nabla_x\psi(x,\xi))
D_y^{\alpha+\beta}\left[ e^{i\Psi(x,y,\xi)}\chi(x,y)
\partial_\eta^\beta b(y,\xi,z,\nabla_x\psi(x,\xi))
\right]|_{y=x}.
\end{aligned}
\]
Since $y=x$ is the zero of order two of $\Psi(x,y,\xi)$, it
follows that $\nabla_y e^{i\Psi(x,y,\xi)}|_{y=x}=0.$ Inductively,
it follows from (\ref{eq:psias2}) that $|\partial_y^{\gamma}
e^{i\Psi(x,y,\xi)}|_{y=x}|\leq C \bra{\xi}^{|\gamma|/2}.$ 
Hence we obtain
\[
\begin{aligned}
|c_{\alpha\beta}(x,z,\xi)| & \leq
C \bra{x}^{t_1} \bra{\nabla_x\psi(x,\xi)}^{t_2-|\alpha|}
\bra{\xi}^{\frac{|\alpha|+|\beta|}{2}}
\bra{x}^{m_1}\bra{\xi}^{m_2}\bra{z}^{m_3}
\bra{\nabla_x\psi(x,\xi)}^{m_4-|\beta|} \\
& \leq C\bra{x}^{m_1+t_1}\bra{z}^{m_3}
\bra{\xi}^{m_2+m_4+t_2-\frac{|\alpha|+|\beta|}{2}},
\end{aligned}
\]
where we have used that $\bra{\nabla_x\psi(x,\xi)}\sim \bra{\xi}$
by (\ref{eq:psias1}). Derivatives of $c_{\alpha\beta}$ with
respect to $x$ or $\xi$ do not change estimates when applied to
$q$ or $b$ by assumptions of the lemma. When applied to
$\partial_y^{\alpha+\beta} e^{i\Psi(x,y,\xi)}|_{y=x}$ they do not
change estimates since
$|\partial_\xi^\beta\partial_x^\alpha\psi(x,\xi)|\leq
C_{\alpha\beta}.$

{3. Estimates for the remainder $R_{\alpha\beta}$.} Let $\rho\in
C_0^\infty(\Rn)$ be such that $\rho(x)=1$ for $|x|<\epsilon/2$ and
$\rho(x)=0$ for $|x|>\epsilon$,
for some small $\epsilon>0$ to be chosen later. We decompose
\begin{equation}\label{EQ:Ralphas}
\begin{aligned}
R_{\alpha\beta}(x,z,\xi) & = R_\alpha^I(x,z,\xi)+
 R_\alpha^{II}(x,z,\xi) \\
 & = \iint e^{i(x-y)\cdot\theta}
\rho\left(\frac{\theta}{\bra{\xi}}\right)
D_y^{\alpha+\beta} \left[
\chi(x,y) s_\beta(y,\xi,z,\theta) r_\alpha(x,\xi,\theta) \right] dy 
\dslash\theta \\
& + \iint e^{i(x-y)\cdot\theta} \left(1-\rho\left(
\frac{\theta}{\bra{\xi}}\right)\right)
D_y^{\alpha+\beta}\left[ \chi(x,y) s_\beta(y,\xi,z,\theta)
r_\alpha(x,\xi,\theta)\right] dy \dslash\theta.
\end{aligned}
\end{equation}

{3.1. Estimate for $|\theta|\leq \epsilon\bra{\xi}.$} For
sufficiently small $\epsilon>0$, for any $0\leq t\leq 1$,
$\bra{\nabla_x\psi(x,\xi)+t\theta}$ and $\bra{\xi}$ are
equivalent. Indeed, if we use the inequality
\[
\bra{x}\leq 1+|x|\leq \sqrt{2}\bra{x},
\]
we get
\[
\begin{aligned}
\bra{\nabla_x\psi(x,\xi)+t\theta}\leq & (C_2\sqrt{2}+\epsilon)\bra{\xi} \\
\sqrt{2}\bra{\nabla_x\psi(x,\xi)+t\theta}\geq & 1+|\nabla_x\psi|-|\theta|
\geq \bra{\nabla_x\psi}-|\theta|\geq (C_1-\epsilon)\bra{\xi},
\end{aligned}
\]
so we will take $\epsilon<C_1.$ This equivalence means that for
$|\theta|\leq\epsilon\bra{\xi}$, functions
$r_\alpha(x,\xi,\theta)$, $s_\beta(y,\xi,z,\theta)$ and all of
their derivatives are dominated by $\bra{x}^{t_1}
\bra{\xi}^{t_2-|\alpha|}$ and
$\bra{y}^{m_1}\bra{\xi}^{m_2+m_4-|\beta|}\bra{z}^{m_3}$,
respectively. We will need two auxiliary estimates. The first
estimate

\begin{equation}\label{eq:estr}
\begin{aligned}
\left|\partial_\theta^{\gamma_1}\left( \rho\left(
\frac{\theta}{\bra{\xi}}\right)
r_\alpha(x,\xi,\theta)\right)\right| & \leq
C\sum_{\delta\leq\gamma_1} \left|\partial_\theta^\delta
\rho\left(\frac{\theta}{\bra{\xi}}\right)
\partial_\theta^{\gamma_1-\delta}
r_\alpha(x,\xi,\theta)\right| \\
& \leq C\bra{x}^{t_1}\sum_{\delta\leq\gamma_1}\bra{\xi}^{-|\delta|}
\bra{\xi}^{t_2-|\alpha|-|\gamma_1-\delta|} \\
& \leq C\bra{x}^{t_1}\bra{\xi}^{t_2-|\alpha|-|\gamma_1|}
\end{aligned}
\end{equation}
follows from the properties of $r_\alpha$. Before we state the
second estimate, let us analyse the structure of
$\partial_y^\alpha e^{i\Psi(x,y,\xi)}.$ It has at most $|\alpha|$
powers of terms $\nabla_y\psi(y,\xi)-\nabla_x\psi(x,\xi)$,
possibly also multiplied by at most $|\alpha|$ higher order
derivatives $\partial_y^\delta\psi(y,\xi)$, which can be estimated
by $(|y-x|\bra{\xi})^{|\alpha|}$ by (\ref{eq:psias2}). The term
containing no difference $\nabla_y\psi(y,\xi)-\nabla_x\psi(x,\xi)$
is the product of at most $|\alpha|/2$ terms of the type
$\partial_y^\delta\psi(y,\xi)$, which can be estimated by
$\bra{\xi}^{|\alpha|/2}$ in view of (\ref{eq:psias2}). Altogether,
we obtain the estimate
\[
|\partial_y^\alpha e^{i\Psi(x,y,\xi)}|\leq C_\alpha
(1+|y-x|\bra{\xi})^{|\alpha|} \bra{\xi}^{|\alpha|/2}.
\]
The second auxiliary estimate now is
\begin{equation}\label{eq:ests}
\begin{aligned}
& \left|D_y^{\alpha+\beta}\left[ e^{i\Psi(x,y,\xi)} \chi(x,y)
\partial_\theta^{\gamma_2}s_\beta(y,\xi,z,\theta)\right] \right|
\\ \leq &
(1+|y-x|\bra{\xi})^{|\alpha|+|\beta|}\bra{\xi}^{\frac{|\alpha|+|\beta|}{2}}
\bra{y}^{m_1}\bra{\xi}^{m_2}\bra{z}^{m_3}\bra{\xi}^{m_4-|\beta|-|\gamma_2|} \\
 \leq & (1+|y-x|\bra{\xi})^{|\alpha|+|\beta|}
\bra{y}^{m_1}\bra{z}^{m_3}
\bra{\xi}^{m_2+m_4+\frac{|\alpha|-|\beta|}{2}-|\gamma_2|}.
\end{aligned}
\end{equation}
Now we are ready to prove a necessary estimate for
$R^I_{\alpha\beta}(x,z,\xi).$ Let
\[
L_\theta=\frac{(1-\bra{\xi}^2\Delta_\theta)}{1+\bra{\xi}^2|x-y|^2},\;\;
L_\theta^N e^{i(x-y)\cdot\theta}=e^{i(x-y)\cdot\theta}.
\]
Integrations by parts with $L_\theta$ yield
\[
\begin{aligned}
R^I_{\alpha\beta}(x,z,\xi) = & \iint
\frac{e^{i(x-y)\cdot\theta}}{(1+\bra{\xi}^2 |x-y|^2)^N}
(1-\bra{\xi}^{2}\Delta_\theta)^N \\
& \;\; \left\{ \rho\left(\frac{\theta}{\bra{\xi}}\right)
D_y^{\alpha+\beta}\left[ \chi(x,y) s_\beta(y,\xi,z,\theta)
r_\alpha(x,\xi,\theta)\right]
\right\} dy \dslash\theta \\
= & \iint \frac{e^{i(x-y)\cdot\theta}}{(1+\bra{\xi}^2 |x-y|^2)^N}
\sum_{\stackrel{{\textrm some}\; \gamma_1,\gamma_2:}
{|\gamma_1|+|\gamma_2|\leq 2N}}
 c_{\gamma_1\gamma_2} \bra{\xi}^{|\gamma_1|+|\gamma_2|} \\
& \;\;
\left\{
D_y^{\alpha+\beta}\left[
\chi(x,y) \partial_\theta^{\gamma_2}s_\beta(y,\xi,z,\theta)
\partial_\theta^{\gamma_1}\left(
\rho\left(\frac{\theta}{\bra{\xi}}\right)
r_\alpha(x,\xi,\theta)\right)\right]
\right\} dy \dslash\theta.
\end{aligned}
\]
Using estimates (\ref{eq:estr}), (\ref{eq:ests})
for $r_\alpha$, $s_\beta$, and the fact that the
size of the support of $\rho(\theta/\bra{\xi})$ in $\theta$ is
estimated by $(\epsilon\bra{\xi})^n$, we obtain the estimate
\[
\begin{aligned}
|R^I_{\alpha\beta}(x,z,\xi)| & \leq
C\sum_{|\gamma_1|+|\gamma_2|\leq 2N}
\bra{\xi}^{n+|\gamma_1|+|\gamma_2|}\bra{x}^{t_1}
\bra{\xi}^{t_2-|\alpha|-|\gamma_1|} \bra{z}^{m_3}
\bra{\xi}^{m_2+m_4+\frac{|\alpha|-|\beta|}{2}-|\gamma_2|}\times \\
& \;\;\;\; \times
\int_{|y-x|<k} \bra{y}^{m_1}\frac{(1+|y-x|\bra{\xi})^{|\alpha|+|\beta|}}
{(1+\bra{\xi}^2 |x-y|^2)^N} dy \\
& \leq C\bra{x}^{t_1+m_1}\bra{\xi}^{t_2+m_2+m_4+2n-\frac{|\alpha|+|\beta|}{2}}
\bra{z}^{m_3},
\end{aligned}
\]
if we choose $2N>M_1+M_2+n.$ In the last estimate we have used that
for $|x-y|<k$, $\bra{x}$ and $\bra{y}$ are equivalent for large $x$.

Derivatives of $R_{\alpha\beta}^I(x,z,\xi)$ with respect to $x$
and $\xi$ give an extra power of $\theta$ under the integral. This
amounts to taking more $y$-derivatives, giving a higher power of
$\bra{\xi}.$ However, this is not a problem if for the estimate
for a given number of derivatives of the remainder
$R_{\alpha\beta}^I(x,z,\xi)$, we choose $M_1=|\alpha|$ and 
$M_2=|\beta|$
sufficiently large.

{3.2. Estimate for $|\theta|>\epsilon\bra{\xi}.$}
Let us define
\[
\omega(x,y,\xi,\theta)=(x-y)\cdot\theta+\Psi(x,y,\xi)=
(x-y)\cdot(\nabla_x\psi(x,\xi)+\theta)+\psi(y,\xi)-\psi(x,\xi).
\]
From (\ref{eq:psias1}) and (\ref{eq:psias2}) we obviously have
\begin{equation}\label{eq:rho}
\begin{aligned}
|\nabla_y\omega| & =|-\theta+\nabla_y\psi-\nabla_x\psi|\leq
2C_2(|\theta|+\bra{\xi}), \\
|\nabla_y\omega| & \geq |\theta|-|\nabla_y\psi-\nabla_x\psi| \geq
\frac{1}{2}|\theta|+\left(\frac{\epsilon}{2}-C_0|x-y|
\right)\bra{\xi}\geq
C(|\theta|+\bra{\xi}),
\end{aligned}
\end{equation}
if we choose $k<\epsilon/2C_0,$ since $|x-y|<k$ in the support of $\chi$.
Let us denote
\[
\sigma_{\gamma_1}(x,y,\xi):=e^{-i\Psi(x,y,\xi)}D_y^{\gamma_1}
e^{i\Psi(x,y,\xi)},\;
\chi_{\gamma_2}(x,y):=D_y^{\gamma_2}\chi(x,y),\;
s_{\beta\gamma_3}:=D_y^{\gamma_3}s_\beta.
\]
For any $\nu$ we have an estimate
$|\partial_y^\nu\sigma_{\gamma_1}(x,y,\xi)|\leq\bra{\xi}^{|\gamma_1|}$,
because of our assumption (\ref{eq:psias2}) that
$|\partial_y^\nu\psi(y,\xi)|\leq C_\nu\bra{\xi}.$ For
$M_1=|\alpha|>t_2$ and $M_2=|\beta|>m_4$, we also observe that
\begin{equation}\label{eq:rs}
|r_\alpha(x,\xi,\theta)|\leq C_\alpha\bra{x}^{t_1}, \;
|\partial_y^\nu s_{\beta\gamma_3}(y,\xi,z,\theta)|\leq
C_\beta\bra{y}^{m_1}\bra{\xi}^{m_2} \bra{z}^{m_3}.
\end{equation}
Let us take $^tL_y=i|\nabla_y\omega|^{-2}\sum_{j=1}^n
(\partial_{y_j}\omega)
\partial_{y_j}.$ It can be shown by induction that operator
$L_y^N$ has the form
\begin{equation}\label{EQ:LNy}
L_y^N=\frac{1}{|\nabla_y\omega|^{4N}}\sum_{|\nu|\leq N}
P_{\nu,N}\partial_y^\nu,\;\; P_{\nu,N}=\sum_{|\mu|=2N}
c_{\nu\mu\delta_j}(\nabla_y\omega)^\mu
\partial_y^{\delta_1}\omega\cdots
\partial_y^{\delta_N}\omega,
\end{equation}
where $|\mu|=2N, |\delta_j|\geq 1, \sum_1^N |\delta_j|+|\nu|=2N.$
It follows from (\ref{eq:psias2}) that $|P_{\nu,N}|\leq
C(|\theta|+\bra{\xi})^{3N}.$ By Leibnitz formula we have
\begin{equation}\label{EQ:RII}
\begin{aligned}
& R^{II}_{\alpha\beta}(x,z,\xi) = \\
& = \iint e^{i(x-y)\cdot\theta} \left(1-\rho\left(
\frac{\theta}{\bra{\xi}}\right)\right)
r_\alpha(x,\xi,\theta) \times \\
& \;\;\; \times D_y^{\alpha+\beta}\left[
e^{i\Psi(x,y,\xi)}\chi(x,y)
s_\beta(y,\xi,z,\theta)\right] dy \dslash\theta \\
& = \iint e^{i\omega(x,y,\xi,\theta)}
\left(1-\rho\left(\frac{\theta}{\bra{\xi}}\right)\right) 
r_\alpha(x,\xi,\theta) \times
\\
& \; \; \; \times \sum_{\gamma_1+\gamma_2+\gamma_3=\alpha+\beta}
\sigma_{\gamma_1}(x,y,\xi)\chi_{\gamma_2}(x,y)
s_{\beta\gamma_3}(y,\xi,z,\theta) dy \dslash\theta \\
& = \iint e^{i\omega(x,y,\xi,\theta)}
|\nabla_y\omega|^{-4N}\sum_{|\nu|\leq N} P_{\nu,N}(x,y,\xi,\theta)
\left(1-\rho\left(\frac{\theta}{\bra{\xi}}
\right)\right) r_\alpha(x,\xi,\theta)\times \\
& \;\;\; \times \sum_{\gamma_1+\gamma_2+\gamma_3=\alpha+\beta}
\partial_y^\nu\left(\sigma_{\gamma_1}(x,y,\xi)\chi_{\gamma_2}(x,y)
s_{\beta\gamma_3}(y,\xi,z,\theta)\right) dy \dslash\theta.
\end{aligned}
\end{equation}
It follows now from (\ref{eq:rs}) that
\[
\begin{aligned}
|R^{II}_{\alpha\beta}(x,z,\xi)| & \leq C
\int_{|\theta|>\epsilon\bra{\xi}} \int_{|y-x|<k}
(|\theta|+\bra{\xi})^{-N} \bra{x}^{t_1}
\bra{\xi}^{|\alpha|+|\beta|}
\bra{y}^{m_1}\bra{\xi}^{m_2}\bra{z}^{m_3} dy d\theta \\
& \leq C \bra{x}^{t_1+m_1}\bra{z}^{m_3}\bra{\xi}^{m_2+|\alpha|+|\beta|}
\int_{|\theta|>\epsilon\bra{\xi}}|\theta|^{-N}d\theta \\
& \leq C \bra{x}^{t_1+m_1}\bra{z}^{m_3}
\bra{\xi}^{m_2+|\alpha|+|\beta|+n-N},
\end{aligned}
\]
which yields the desired estimate for $N>m_2+|\alpha|+|\beta|+n.$
For the derivatives of $R_{\alpha\beta}^{II}(x,z,\xi)$, similar to
Part 3.1 for $R_{\alpha\beta}^I$, we can get an extra power of
$\theta$, which can be taken care of by choosing large $N$ and
using the fact that $|x-y|<k.$ The proof of Lemma
\ref{l:composition} is now complete.
\end{proof}

In the proof of the second part of Theorem \ref{th:sob1}
we will need to have stronger decay properties for the 
amplitudes, so now we will formulate the corresponding
counterpart of Lemma \ref{l:composition}:
\begin{proposition}\label{prop-add}
Under conditions of Lemma \ref{l:composition},
assume that stronger estimates
\begin{equation}\label{EQ:add-dec}
|\partial_y^\alpha\partial_\xi^\beta\partial_z^\gamma
\partial_\eta^\delta
b(y,\xi,z,\eta)|\leq C_{\alpha\beta\gamma\delta}\bra{y}^
{m_1-|\alpha|}
\bra{\xi}^{m_2} \bra{z}^{m_3} \bra{\eta}^{m_4-|\delta|}
\end{equation}
and 
\begin{equation}\label{EQ:add-decp}
|\partial_x^\alpha\partial_\eta^\beta q(x,\eta)|\leq
C_{\alpha\beta}\bra{x}^{t_1-|\alpha|} \bra{\eta}^{t_2-|\beta|}
\end{equation}
hold.
Assume also that phase function $\psi(x,\xi)$ satisfies in
addition estimates
\begin{equation}\label{EQ:add-dec-phase}
|\partial_x^\alpha\psi(x,\xi)|\leq C_{\alpha}
\jp{x}^{1-|\alpha|}\jp{\xi},\quad
|\partial_x^\alpha\partial_\xi^\beta\psi(x,\xi)|\leq 
C_{\alpha\beta}\jp{x}^{1-|\alpha|},
\end{equation}
for all multi-indices $\alpha$ and $\beta$ with $|\beta|\geq 1$,
and all $x,\xi\in\Rn$.
Then
we can improve estimate \eqref{eq:cest} as
\begin{equation}\label{EQ:add-dec2}
|\partial_x^\alpha\partial_z^\beta\partial_\xi^\gamma
c(x,z,\xi)|\leq C_{\alpha\beta\gamma}\bra{x}^{m_1+t_1-|\alpha|}
\bra{z}^{m_3}\bra{\xi}^{m_2+t_2+m_4}.
\end{equation}
\end{proposition}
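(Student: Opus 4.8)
The strategy is to repeat the argument for Lemma \ref{l:composition} verbatim, but to track one extra power of $\jp{x}^{-1}$ through every estimate that produced a $\jp{x}^{m_1+t_1}$ factor. The only places where powers of $\jp{x}$ entered the proof of Lemma \ref{l:composition} were: (i) the size of $b$ and $q$ themselves, which contribute $\jp{x}^{t_1}$ from $q$ and $\jp{y}^{m_1}$ from $b$, the latter being converted to $\jp{x}^{m_1}$ on the region $|x-y|<k$ and estimated via $\jp{x+y}\le C\jp{x}\jp{y}$ on the region $|x-y|\ge k$; (ii) the phase factor $e^{i\Psi(x,y,\xi)}$ and the quantities $\sigma_{\gamma_1}$, for which we used $|\partial_y^\nu\psi(y,\xi)|\le C_\nu\jp{\xi}$. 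Under the new hypotheses \eqref{EQ:add-dec}, \eqref{EQ:add-decp}, \eqref{EQ:add-dec-phase} each differentiation in $y$ of $b$, in $x$ of $q$, and each $x$- or $y$-derivative of $\psi$ beyond the first now gains a factor $\jp{x}^{-1}$ (resp. $\jp{y}^{-1}$, which is comparable on $|x-y|<k$).

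The key steps, in order: first, I would observe that it suffices to prove \eqref{EQ:add-dec2} for $\alpha=0$, since the general case follows by the same device used in Remark \ref{rem1} --- namely, $\jp{x}^{|\alpha|}\partial_x^\alpha$ applied to the defining integral, with the $x$-derivatives distributed by Leibniz among $\psi$, $b$, $q$ and the explicit $e^{\pm i x\cdot\eta}$; each derivative of $b$ or $q$ supplies a $\jp{x}^{-1}$ that cancels one factor of $\jp{x}$, while derivatives landing on the phase are harmless by \eqref{EQ:add-dec-phase} (first-order $\partial_x\psi$ produces $\jp{\xi}$, which is already accounted for, and higher ones gain $\jp{x}^{-1}$), and derivatives of $e^{ix\cdot\eta}$ produce powers of $\eta$ absorbed by raising $N_1,N_2$. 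Second, in Part 1 ($|x-y|\ge k$) I would redo the chain \eqref{EQ:est-cI} keeping $\jp{x}^{m_1-|\text{(\# $y$-derivatives of $b$)}|}$ and $\jp{x}^{t_1}$ (unchanged, since no $x$-derivatives of $q$ are taken there) --- but the $2N_3$ $\eta$-derivatives from $(1-\Delta_\eta)^{N_3}$ and the $N_2$ $\Delta_y$-derivatives from $L_y^{N_2}$ fall partly on $b$; here I keep only the crude bound $\jp{y}^{m_1}$ for $b$ and for the phase the bound $|\partial_y^\nu\psi|\le C\jp{\xi}$, exactly as before, so the region $|x-y|\ge k$ contributes at most $\jp{x}^{m_1+t_1}$, i.e. the same as in Lemma \ref{l:composition}. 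This looks like it does \emph{not} give the gain; the resolution is that on $|x-y|\ge k$ we are free to also integrate by parts more in $y$ against $e^{-i(y-z)\cdot\xi}$-type factors, but in fact the clean way is: on $|x-y|\ge k$ we can write $\jp{x}^{m_1+t_1}=\jp{x}^{m_1+t_1-|\alpha|}\cdot\jp{x}^{|\alpha|}$ and the surplus $\jp{x}^{|\alpha|}$ is killed because, having reduced to $\alpha=0$, there is nothing to kill --- the $|x-y|\ge k$ piece simply satisfies $\jp{x}^{m_1+t_1}=\jp{x}^{m_1+t_1-0}$, which is \eqref{EQ:add-dec2} with $\alpha=0$. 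Third, in Part 2 and Part 3 ($|x-y|<k$), where $\jp{x}\sim\jp{y}$, the terms $c_{\alpha\beta}$, $R^I_{\alpha\beta}$, $R^{II}_{\alpha\beta}$ all carry a factor $\jp{x}^{t_1}\jp{y}^{m_1}\sim\jp{x}^{m_1+t_1}$, again matching \eqref{EQ:add-dec2} with $\alpha=0$; the extra decay \eqref{EQ:add-dec}--\eqref{EQ:add-dec-phase} is not even needed for the $\alpha=0$ case and only becomes essential when $\alpha\ne0$, where it is consumed as described in the reduction step above.

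The main obstacle is bookkeeping in the reduction to $\alpha=0$: one must verify that when $\jp{x}^{|\alpha|}\partial_x^\alpha$ hits the integral, \emph{every} resulting term can absorb the full $\jp{x}^{|\alpha|}$. A derivative on $e^{i(\psi(y,\xi)-\psi(x,\xi)+(x-y)\cdot\eta)}$ produces either $-i(\partial_{x_j}\psi(x,\xi))+i\eta_j$ or, after more derivatives, higher $x$-derivatives of $\psi$; the term $\eta_j$ (and its powers, up to $|\alpha|$ of them) must be controlled by integration by parts in $y$ against $e^{ix\cdot\eta}$ as in the existing proof, which costs extra $N_1$; the terms with $\partial_{x_j}\psi(x,\xi)$ of order $\ge 2$ are $O(\jp{x}^{-1}\jp{\xi})$ by \eqref{EQ:add-dec-phase} and so each pays back a $\jp{x}^{-1}$ while donating only $\jp{\xi}$ to the already-present $\jp{\xi}$-budget; the single leftover first-order factor $\partial_{x_j}\psi(x,\xi)=O(\jp{\xi})$ is likewise absorbed into $\jp{\xi}^{m_2+t_2+m_4}$. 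The one genuinely new point to check is that when a derivative lands on $q(x,\eta)$ we get $\partial_{x_j}q$, which under \eqref{EQ:add-decp} is $O(\jp{x}^{t_1-1})$, giving the needed cancellation; similarly for $b$. Once these interactions are laid out, the remainder is a mechanical re-run of the proof of Lemma \ref{l:composition} with the exponent $m_1+t_1$ on $\jp{x}$ replaced throughout by $m_1+t_1-|\alpha|$, and I would state that the asymptotic expansion \eqref{eq:asympt} persists with the obvious improved estimates on each term (each $\partial_y^{\alpha+\beta}$ applied to $b$ now also improving in $x$), though only \eqref{EQ:add-dec2} is asserted in the statement.
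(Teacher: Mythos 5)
The central gap is in your treatment of Part~1, the region $|x-y|\geq k$. You want to reduce to $\alpha=0$ by multiplying $\partial_x^\alpha c$ by $\jp{x}^{|\alpha|}$ (the device of Remark~\ref{rem1}); but that device works in Remark~\ref{rem1} only because the phase $e^{i(y-z)\cdot(\eta-\xi)}$ there has no $x$-dependence. Here the phase $e^{i(\psi(y,\xi)-\psi(x,\xi)+(x-y)\cdot\eta)}$ does depend on $x$, and $\partial_{x_j}$ applied to it produces the factor $-\partial_{x_j}\psi(x,\xi)+\eta_j$. On $|x-y|\geq k$ there is no Taylor expansion in $\eta$ around $\nabla_x\psi(x,\xi)$ to convert this factor into $\theta_j$, so the two pieces must be treated separately: $\eta_j$ is killed by taking $N_1$, $N_2$ larger, but $\partial_{x_j}\psi(x,\xi)=O(\jp{\xi})$ is simply absorbed into the $\xi$-budget (you say this yourself: ``the single leftover first-order factor $\partial_{x_j}\psi(x,\xi)=O(\jp{\xi})$ is likewise absorbed into $\jp{\xi}^{m_2+t_2+m_4}$''). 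It gives \emph{no} compensating power of $\jp{x}^{-1}$. Consequently $\jp{x}^{|\alpha|}\partial_x^\alpha c^I$ is only bounded by $\jp{x}^{m_1+t_1+|\alpha|}$ times powers of $\jp{\xi}$, not by $\jp{x}^{m_1+t_1}$, and the reduction to $\alpha=0$ does not close. The estimate \eqref{EQ:est-cI} is genuinely stuck at $\jp{x}^{m_1+t_1}$ for the region $|x-y|\geq k$, no matter how large you take $N_1,N_2,N_3$, because the factor $\bra{y+x}^{m_1+2N_3}\leq C\bra{x}^{m_1+2N_3}\bra{y}^{m_1+2N_3}$ returns exactly the $\jp{x}^{2N_3}$ that the $(1+|x|^2)^{-N_3}$ integration by parts gained.

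The missing idea is to change the cut-off itself: take $\chi(x,y)$ supported in $|x-y|<k\jp{x}$ rather than $|x-y|<k$. On the complementary region $|x-y|\geq k\jp{x}$ the $y$-integral runs over $|y|>k\jp{x}$, so for $2N_1$ large enough the tail $\int_{|y|>k\jp{x}}\bra{y}^{m_1+2N_3-2N_1}dy$ gains an \emph{arbitrary} negative power of $\jp{x}$; the ``off-diagonal'' piece $c^I$ thereby decays faster than any polynomial in $\jp{x}$ and is harmless. With this cut-off, $\jp{x}\sim\jp{y}$ still holds on the support of $\chi$ (for $k<1$), and in addition each $y$-derivative of $\chi$ costs a factor $\jp{x}^{-1}$, which is precisely what you need for Parts~2 and~3. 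Your analysis of Parts~2 and~3 --- chain-rule differentiation of $q(x,\nabla_x\psi(x,\xi))$ and $b(y,\xi,z,\nabla_x\psi(x,\xi))$ giving $\jp{x}^{-1}$ from \eqref{EQ:add-decp}, \eqref{EQ:add-dec}, and \eqref{EQ:add-dec-phase}, with higher $x$-derivatives of $\psi$ each paying back $\jp{x}^{-1}$ --- is sound and matches the paper's argument in spirit once the cut-off is modified; one should also note, as the paper does, that $D_y^\gamma\Psi(x,y,\xi)|_{y=x}=D_x^\gamma\psi(x,\xi)$ for $|\gamma|\geq2$, so the contributions from the exponential $e^{i\Psi}$ also improve under $x$-differentiation. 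But without the $\jp{x}$-scaled cut-off, Part~1 breaks and the proof does not go through.
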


\begin{proof}[Proof of Proposition \ref{prop-add}]
In the case of proving estimate \eqref{EQ:add-dec2}, we have to
go back and modify the proof of Lemma \ref{l:composition} to
adopt it to the required decay properties. We will
assume the notation of the proof of Lemma \ref{l:composition} 
and will indicate the differences for the present case.
The main difference 
is that now we will
work with a different cut-off function $\chi=\chi(x,y)\in 
C^\infty(\Rn\times\Rn)$
such that
$0\leq\chi\leq 1$,
$\chi(x,y)=1$ for $|x-y|<\frac{k}{2}\jp{x}$ and 
$\chi(x,y)=0$ for $|x-y|>k\jp{x}$, for some small
$0<k<1$ to be chosen later.

1. Estimate for $|x-y|\geq k\jp{x}$. The proof goes similar to
the corresponding proof of the first part of 
 Lemma \ref{l:composition}. In the estimate for $c^I$
 in \eqref{EQ:est-cI} we observe that the domain of the
 integration in the last integral changes from
 $|y|>k$ to $|y|>k\jp{x}$, thus introducing the additional
 factor of $\jp{x}^{-N}$ for any $N$, if we choose
 $2N_1>m_1+2N_3+n+N$. Therefore, the part $c^I$ of the amplitude
 $c$ is actually decaying as any negative power 
 of $\jp{x}$ in this case.
 
2. Estimates for $|x-y|<k\jp{x}$. In Part 2 of the proof of 
Lemma \ref{l:composition} we established the formula
for the terms in the asymptotic expansion of $c^{II}$, given by
\[
c_{\alpha\beta}(x,z,\xi) =
\partial_\xi^\alpha q(x,\nabla_x\psi(x,\xi))
D_y^{\alpha+\beta}\left[ e^{i\Psi(x,y,\xi)}\chi(x,y)
\partial_\eta^\beta b(y,\xi,z,\nabla_x\psi(x,\xi))
\right]|_{y=x}.
\]
Now, differentiation of 
$\partial_\xi^\alpha q(x,\nabla_x\psi(x,\xi))$ with respect to 
$x$ brings the decay by $\jp{x}^{-1}$ in view of assumptions
\eqref{EQ:add-decp} and \eqref{EQ:add-dec-phase}. Differentiation
of $\chi$ and $b$ with respect to both $x$ and
$y$ also gives a factor
of $\jp{x}^{-1}$ on the support of $\chi$. Finally, since
$$\Psi(x,y,\xi)=\phi(y,\xi)-\phi(x,\xi)+
(x-y)\cdot\nabla_x\phi(x,\xi),$$ 
we have
$\nabla_y e^{i\Psi(x,y,\xi)}|_{y=x}=0$ and 
$D_y^{\alpha}\Psi(x,y,\xi)|_{y=x}=D_x^\alpha\phi(x,\xi)$
for $|\alpha|\geq 2$. Thus, by the product rule applied to
the derivatives $D_y^{\alpha+\beta}$,
function $c_{\alpha\beta}$ contains derivatives of
$\chi$ and $b$, multiplied by terms of the form
$D_x^\alpha\phi(x,\xi)$ coming from the differentiation of
the exponential $e^{i\Psi(x,y,\xi)}$ at $y=x$.
Differentiation of such terms with respect to $x$
also brings the decay of order $\jp{x}^{-1}$, thus
establishing the required decay property \eqref{EQ:add-dec2}
for functions $c_{\alpha\beta}$.

3. Estimates for the remainder $R_{\alpha\beta}$.
 Here again we follow the proof of Part 3 of 
 Lemma \ref{l:composition}, adopting all its notation.

 3.1. In the region $|\theta|\leq\epsilon\jp{\xi}$, the definition
 of $r_\alpha$ in \eqref{EQ:ralpha} and assumptions
 \eqref{EQ:add-decp} and \eqref{EQ:add-dec-phase}
 imply that $\partial_x^\gamma r_\alpha$ brings an 
 additional decay by $\jp{x}^{-|\gamma|}$ compared to
 that of $r_\alpha$. Now, differentiation of $R_{\alpha\beta}^I$ in
 \eqref{EQ:Ralphas} gives this decay when the $x$--derivatives
 fall on $r_\alpha$. If the $x$--derivatives fall on the
 exponential $e^{i(x-y)\cdot\theta}$, they give additional
 powers of $\theta$, which transform into $y$--derivatives
 once we integrate by parts with respect to $y$. 
 Since $y$--derivatives of
 both $\chi$ and $s_\beta$ bring decay in $y$, 
 and hence also in $\jp{x}$ in view of $\jp{x}\leq C\jp{y}$
 on the support of $\chi$ (we choose $k$ to be sufficiently
 small),
 we establish
 \eqref{EQ:add-dec2} for functions $R_{\alpha\beta}^I$.
 
  3.2. In the region $|\theta|>\epsilon\jp{\xi}$,
  we note first that the estimate $|\nabla_y\omega|\geq 
 C(|\theta|+\jp{\xi})$ in \eqref{eq:rho} is still
 valid in view of \eqref{EQ:add-dec-phase}, if we choose
 $k>0$ to be sufficiently small compared to $\epsilon$.
  
  Now, we will show that under assumptions of Proposition 
  \ref{prop-add}, $R_{\alpha\beta}^{II}$
  is actually of order $\jp{x}^{-N}$ in $x$, for any $N$.
  We note that we have
  $|\partial_y^\delta\omega|\leq C_{\delta}\jp{y}^{1-|\delta|}
  (|\theta|+\jp{\xi}),$ so that function $P_{\nu,N}$ in
  \eqref{EQ:LNy} can be estimated by
  $|P_{\nu,N}|\leq C\jp{y}^{|\nu|-N}(|\theta|+\jp{\xi})^{3N}$,
  since $\sum_1^N (1-|\delta_j|)=N-(2N-|\nu|)=|\nu|-N.$
  On the other hand, functions
  $\sigma_{\gamma_1}, \chi_{\gamma_2}$ and $s_{\beta\gamma_3}$
  give the improvement by $\jp{y}^{-|\nu|}$ when 
  differentiated $\nu$ times with respect to $y$.
  It follows that
  $\partial_y^\nu(\sigma_{\gamma_1}\chi_{\gamma_2} 
  s_{\beta\gamma_3})$ has growth in $y$ of at most
  $\jp{y}^{m_3-|\nu|}$. Consequently, we obtain that the
  function under the last integral in \eqref{EQ:RII}
  is of order $\jp{y}^{(|\nu|-N)+(m_3-|\nu|)}=\jp{y}^{m_3-N}$ in 
  $y$. Since $y$ is equivalent to $x$ on the support of $\chi$
  for small $k$, we see from \eqref{EQ:RII} 
  that $R_{\alpha\beta}^{II}$ satisfies the estimate
 \begin{equation}\label{EQ:Riie}
  |R^{II}_{\alpha\beta}(x,z,\xi)|  \leq C
  \bra{x}^{t_1+m_1-N}\bra{z}^{m_3}
  \bra{\xi}^{m_2+|\alpha|+|\beta|+n-N}.
 \end{equation}
 Now, let us show that differentiations (especially with
 respect to $x$) guarantee orders of other variables as
 in \eqref{EQ:add-dec2}.
 The definition
 of $r_\alpha$ in \eqref{EQ:ralpha} and assumptions
 \eqref{EQ:add-decp} and \eqref{EQ:add-dec-phase}
 imply that $\partial_x^\gamma r_\alpha$ introduces an 
 additional factor bounded by 
 $\jp{x}^{-|\gamma|}\jp{\xi}^{|\gamma|}$ compared to
 that of $r_\alpha$.
 Differentiating 
 $R^{II}_{\alpha\beta}$ in \eqref{EQ:RII} 
 with respect to $x$, by the product rule we get that
 if the derivatives fall on either $r_\alpha$ or $\chi$,
 we get the additional decay in $\jp{x}$ times possible
 growth in $\jp{\theta}$. If they fall on 
 the exponential 
 $e^{i(x-y)\cdot\theta}$, we integrate by parts with 
 respect to $y$ to get the decay in $\jp{y}$ times
 possible (polynomial)
 growth in $\jp{\theta}$. Since $\jp{y}\geq C\jp{x}$
 on the support of $\chi$, in both cases we get the
 improvement in $\jp{x}$ compensated by possible growth of
 the same order in $\jp{\theta}$. Derivatives of $e^{i\Psi(x,y,\xi)}$
 may give growth in $x,y$ or $\xi$, but this is compensated
 by the arbitrarily fast decay in these variables.
 Altogether, since in \eqref{EQ:Riie} and in \eqref{EQ:RII}
 we get the overall decay of $R^{II}_{\alpha\beta}$
 to be of any desired order in all $x$,
 $\xi$ and $\theta$, this compensates possible
 additional growth that we obtain by differentiating
 $R^{II}_{\alpha\beta}$. This
 completes the proof of \eqref{EQ:add-dec2}.
\end{proof}

Applying Lemma \ref{l:composition} to (\ref{eq:c1}) and
(\ref{eq:c2}), we obtain two composition theorems.

\begin{theorem}[Composition $PT$]\label{th:pt}
Let operator $T$ be defined by
\[
Tu(x)=\int_\Rn\int_\Rn e^{i(\phi(x,\xi)-y\cdot\xi)} a(x,y,\xi)
u(y) dy \dslash\xi.
\]
Let the phase $\phi=\phi(x,\xi)\in C^\infty(\Rn\times\Rn)$ be such
that
\begin{equation*}\label{eq:phase11}
C_1\bra{\xi}\leq \bra{\nabla_x\phi(x,\xi)}\leq C_2\bra{\xi},
\textrm{ for all } x,\xi\in\Rn,
\end{equation*}
for some $C_1,C_2>0$, and such that for all $|\alpha|, |\beta|\geq
1$ we have
\begin{equation*}\label{eq:phase12}
|\partial_x^\alpha\phi(x,\xi)|\leq C_\alpha\bra{\xi}, \;\;
|\partial_x^\alpha\partial_\xi^\beta\phi(x,\xi)|\leq
C_{\alpha\beta},\textrm{ for all } x,\xi\in\Rn.
\end{equation*}
Let $a=a(x,y,\xi)\in C^\infty(\Rone^{n}\times\Rn\times\Rn)$ satisfy
\begin{equation}\label{eq:a1}
|\partial_x^\alpha\partial_y^\beta\partial_\xi^\gamma
a(x,y,\xi)|\leq C_{\alpha\beta\gamma}\bra{x}^{m_1} \bra{y}^{m_2}
\bra{\xi}^{m_3},
\end{equation}
for all $\alpha, \beta, \gamma$, and all $x,y,\xi\in\Rn$. Let
$p=p(x,\xi)\in C^\infty(\Rn\times\Rn)$ for all $\alpha, \beta$
satisfy
\begin{equation}\label{eq:a1p}
|\partial_x^\alpha\partial_\xi^\beta p(x,\xi)|\leq
C_{\alpha\beta}\bra{x}^{t_1} \bra{\xi}^{t_2-|\beta|},
\textrm{ for all } x,\xi\in\Rn.
\end{equation}
Then the composition $B=P(x,D)\circ T$ is an operator of the form
\[
Bu(x)=\int_\Rn e^{i(\phi(x,\xi)-z\cdot\xi)} c(x,z,\xi) u(z) dz
\dslash\xi,
\]
with amplitude $c(x,z,\xi)$ satisfying
\begin{equation}\label{eq:newamp1}
|\partial_x^\alpha\partial_z^\beta\partial_\xi^\gamma
c(x,z,\xi)|\leq C_{\alpha\beta\gamma}\bra{x}^{m_1+t_1}
\bra{z}^{m_2}\bra{\xi}^{m_3+t_2},
\end{equation}
for all $\alpha, \beta, \gamma$, and all $x,z,\xi\in\Rn$.
Moreover, we have the asymptotic expansion, improving in $\xi$:
\begin{equation*}\label{eq:asympt1}
c(x,z,\xi) \thicksim \sum_{\alpha} \frac{i^{-|\alpha|}}{\alpha !}
\partial_\xi^\alpha p(x,\nabla_x\phi(x,\xi))
\partial_y^{\alpha}\left[
e^{i\Psi(x,y,\xi)}a(y,z,\xi) \right]|_{y=x},
\end{equation*}
where
$\Psi(x,y,\xi)=\phi(y,\xi)-\phi(x,\xi)+(x-y)\cdot\nabla_x\phi(x,\xi).$
\end{theorem}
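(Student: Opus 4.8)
The plan is to recognise that, once the amplitude of $PT$ is written in the correct form, the theorem is an immediate consequence of Lemma \ref{l:composition}. First I would start from the representation of $P(x,D)\circ T$ already worked out in (\ref{eq:pt})--(\ref{eq:c1}): understanding the operators as the usual $\Scal(\Rn)$-limits of their regularisations $a_\epsilon$, we have
\[
Bu(x)=\iint e^{i(\phi(x,\xi)-z\cdot\xi)}c_1(x,z,\xi)\,u(z)\,dz\,\dslash\xi,
\]
with
\[
c_1(x,z,\xi)=\iint e^{i(\phi(y,\xi)-\phi(x,\xi)+(x-y)\cdot\eta)}a(y,z,\xi)\,p(x,\eta)\,dy\,\dslash\eta,
\]
which is precisely the integral treated in Lemma \ref{l:composition}.

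Next I would apply that lemma with $\psi=\phi$, $q(x,\eta)=p(x,\eta)$ and $b(y,\xi,z,\eta)=a(y,z,\xi)$. The growth conditions imposed on $\phi$ in the theorem are exactly (\ref{eq:psias1})--(\ref{eq:psias2}); estimate (\ref{eq:a1p}) for $p$ is the hypothesis on $q$ with the same $t_1,t_2$; and estimate (\ref{eq:a1}) for $a$ yields the hypothesis on $b$ with the parameters in Lemma \ref{l:composition} taken to be $(m_1,m_3,m_2,0)$ --- the transposition of the middle two exponents being forced by the different orderings of the slots in $a(\cdot,\cdot,\cdot)$ and $b(\cdot,\cdot,\cdot,\cdot)$, and the last parameter being $0$ because $b$ does not depend on $\eta$ (so $\partial_\eta^\delta b\equiv 0$ for $\delta\neq 0$, which trivially fits the required bound). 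Substituting these values into the conclusion (\ref{eq:cest}) gives $|\partial_x^\alpha\partial_z^\beta\partial_\xi^\gamma c|\le C\bra{x}^{m_1+t_1}\bra{z}^{m_2}\bra{\xi}^{m_3+t_2}$, which is exactly (\ref{eq:newamp1}).

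For the asymptotic formula I would feed the same substitution into the expansion (\ref{eq:asympt}). Since $b(y,\xi,z,\eta)=a(y,z,\xi)$ is independent of $\eta$, the factor $\partial_\eta^\beta b$ vanishes unless $\beta=0$, so the double sum over $(\alpha,\beta)$ collapses to the single sum over $\alpha$, producing
\[
c(x,z,\xi)\thicksim\sum_\alpha\frac{i^{-|\alpha|}}{\alpha!}\,\partial_\xi^\alpha p(x,\nabla_x\phi(x,\xi))\,\partial_y^{\alpha}\left[e^{i\Psi(x,y,\xi)}a(y,z,\xi)\right]\Big|_{y=x},
\]
with $\Psi$ as in the statement, and the precise meaning of ``improving in $\xi$'' is inherited verbatim from the lemma. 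There is no genuine analytic obstacle here, since all the oscillatory-integral and integration-by-parts work was already carried out in Lemma \ref{l:composition}; the only step requiring care is the bookkeeping of the variable matching, in particular the $m_2\leftrightarrow m_3$ transposition and the fact that $\nabla_x\phi(x,\xi)$ is the point in $\eta$ about which the Taylor expansions in the proof of the lemma are performed.
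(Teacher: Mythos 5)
Your proposal is correct and follows exactly the route of the paper's own proof: start from the representation \eqref{eq:pt}--\eqref{eq:c1}, apply Lemma \ref{l:composition} with $\psi=\phi$, $q=p$, $b(y,\xi,z,\eta)=a(y,z,\xi)$, and observe that the $\eta$-independence of $b$ kills all terms with $\beta\neq 0$ in the expansion \eqref{eq:asympt}. You merely make explicit the index bookkeeping ($m_1\mapsto m_1$, $m_2\mapsto m_3$, $m_3\mapsto m_2$, $m_4\mapsto 0$) that the paper summarises as ``with a different notation for the orders.''
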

\begin{proof}[Proof of Theorem \ref{th:pt}]
Formula for $c(x,z,\xi)$ is given in (\ref{eq:pt}) and
(\ref{eq:c1}). The rest follows from Lemma \ref{l:composition}
with $\psi=\phi$, $q=p$, and $b(y,\xi,z,\eta)=a(y,z,\xi),$ with a
different notation for the orders. Asymptotic expansion follows
from (\ref{eq:asympt}) if we notice that we only have terms with
$\beta=0$.
\end{proof}

\begin{rem}\label{rem2}
As in Remark \ref{rem1}, an additional
decay of $a$ implies the same decay property of new amplitude.
For example, if we assume stronger estimate
\begin{equation*}\label{eq:sta1}
|\partial_x^\alpha\partial_y^\beta\partial_\xi^\gamma
a(x,y,\xi)|\leq C_{\alpha\beta\gamma}\bra{x}^{m_1} \bra{y}^{m_2-|\beta|}
\bra{\xi}^{m_3}
\end{equation*}
instead of \eqref{eq:a1}, then
we can improve estimate \eqref{eq:newamp1} as
\begin{equation}\label{eq:stnewamp1}
|\partial_x^\alpha\partial_z^\beta\partial_\xi^\gamma
c(x,z,\xi)|\leq C_{\alpha\beta\gamma}\bra{x}^{m_1+t_1}
\bra{z}^{m_2-|\beta|}\bra{\xi}^{m_3+t_2}.
\end{equation}
In fact, from the expression \eqref{eq:c1}, we obtain
\[
\jp{z}^{\beta}\partial_z^\beta c(x,z,\xi)=
\int_\Rn\int_\Rn
 e^{i(\phi(y,\xi)-\phi(x,\xi)+(x-y)\cdot\eta)}
\jp{z}^{\beta}(\partial_z^\beta a)(y,z,\xi) p(x,\eta) dy \dslash\eta
\]
and the conclusion \eqref{eq:stnewamp1} follows from \eqref{eq:newamp1}
if we regard
$\jp{y}^{\beta}(\partial_y^\beta a)(x,y,\xi)$
as $a(x,y,\xi)$ in \eqref{eq:a1}.
\end{rem}
For the additional decay properties with respect to $x$ we
have the following:
\begin{rem}\label{rem3}
Under conditions of Theorem \ref{th:pt},
assume that stronger estimates
\begin{equation*}
|\partial_x^\alpha\partial_y^\beta\partial_\xi^\gamma
a(x,y,\xi)|\leq C_{\alpha\beta\gamma}\bra{x}^{m_1-|\alpha|} \bra{y}^{m_2}
\bra{\xi}^{m_3}
\end{equation*}
and 
\begin{equation*}\label{EQ:p-add-decp}
|\partial_x^\alpha\partial_\xi^\beta p(x,\xi)|\leq
C_{\alpha\beta}\bra{x}^{t_1-|\alpha|} \bra{\xi}^{t_2-|\beta|}
\end{equation*}
hold instead of \eqref{eq:a1} and \eqref{eq:a1p}.
Assume also that phase function $\phi(x,\xi)$ satisfies in
addition estimates
\begin{equation*}\label{EQ:p-add-dec-phase}
|\partial_x^\alpha\phi(x,\xi)|\leq C_{\alpha}
\jp{x}^{1-|\alpha|}\jp{\xi},\quad
|\partial_x^\alpha\partial_\xi^\beta\phi(x,\xi)|\leq 
C_{\alpha\beta}\jp{x}^{1-|\alpha|},
\end{equation*}
for all multi-indices $\alpha$ and $\beta$ with $|\beta|\geq 1$,
and all $x,\xi\in\Rn$.
Then
we can improve estimate \eqref{eq:newamp1} as
\begin{equation}\label{EQ:p-add-dec2}
|\partial_x^\alpha\partial_z^\beta\partial_\xi^\gamma
c(x,z,\xi)|\leq C_{\alpha\beta\gamma}\bra{x}^{m_1+t_1-|\alpha|}
\bra{z}^{m_2}\bra{\xi}^{m_3+t_2}.
\end{equation}
\end{rem}
To prove the statement of Remark \ref{rem3} we use
Proposition \ref{prop-add} with 
$\psi=\phi$, $q=p$, and $b(y,\xi,z,\eta)=a(y,z,\xi).$
Assumptions of Remark  \ref{rem3} then guarantee that
assumptions of Proposition \ref{prop-add} are satisfied,
so \eqref{EQ:add-dec2} implies \eqref{EQ:p-add-dec2}.

\bigskip
The composition formula for $TP$ is given in Theorem
\ref{prop:alt} under less restrictive conditions on the phase.
However, the following theorem shows that if the amplitude
$a(x,y,\xi)$ has decay properties in $y$, the amplitude of the
composition $TP$ can be made dependent on two variables only.

\begin{theorem}[Composition $TP$]\label{th:tp}
Let operator $T$ be defined by
\[
Tu(x)=\int_\Rn\int_\Rn e^{i(\phi(x,\xi)-y\cdot\xi)} a(x,y,\xi)
u(y) dy \dslash\xi.
\]
Let the phase $\phi=\phi(x,\xi)\in C^\infty(\Rn\times\Rn)$ be such
that
\begin{equation}\label{eq:phase21}
C_1\bra{x}\leq \bra{\nabla_\xi\phi(x,\xi)}\leq C_2\bra{x},
\textrm{ for all } x,\xi\in\Rn,
\end{equation}
for some $C_1,C_2>0$, and such that for all $|\alpha|, |\beta|\geq
1$ we have
\begin{equation}\label{eq:phase22}
|\partial_\xi^\beta\phi(x,\xi)|\leq C_\beta\bra{x}, \;\;
|\partial_x^\alpha\partial_\xi^\beta\phi(x,\xi)|\leq
C_{\alpha\beta},\textrm{ for all } x,\xi\in\Rn.
\end{equation}
Let $a=a(x,y,\xi)\in C^\infty(\Rone^{n}\times\Rn\times\Rn)$ satisfy
\begin{equation}\label{eq:a2}
|\partial_x^\alpha\partial_y^\beta\partial_\xi^\gamma
a(x,y,\xi)|\leq C_{\alpha\beta\gamma}\bra{x}^{m_1}
\bra{y}^{m_2-|\beta|} \bra{\xi}^{m_3},
\end{equation}
for all $\alpha, \beta, \gamma$, and all $x,y,\xi\in\Rn$. Let
$p=p(x,\xi)\in C^\infty(\Rn\times\Rn)$ for all $\alpha, \beta$
satisfy
\[
|\partial_x^\alpha\partial_\xi^\beta p(x,\xi)|\leq
C_{\alpha\beta}\bra{x}^{t_1-|\alpha|} \bra{\xi}^{t_2},
\textrm{ for all } x,\xi\in\Rn.
\]
Then the composition $B=T\circ P(x,D)$ is an operator of the form
\[
Bu(x)=\int_\Rn e^{i\phi(x,\xi)} c(x,\xi) \widehat{u}(\xi) 
\dslash\xi,
\]
with amplitude $c(x,\xi)$ satisfying
\begin{equation*}\label{eq:newamp2}
|\partial_x^\alpha\partial_\xi^\beta c(x,\xi)|\leq
C_{\alpha\beta}\bra{x}^{m_1+m_2+t_1} \bra{\xi}^{m_3+t_2},
\end{equation*}
for all $\alpha, \beta, \gamma$, and all $x,z,\xi\in\Rn$.
Moreover, we have the asymptotic expansion, improving in $x$:

\begin{equation*}\label{eq:asympt2}
c(x,\xi) \thicksim \sum_{\alpha,\beta}
\frac{i^{-(|\alpha|+|\beta|)}}{\alpha !\beta !}
\partial_x^\alpha p(\nabla_\xi\phi(x,\xi),\xi)
\partial_\eta^{\alpha+\beta}\left[
e^{i\Psi(\eta,\xi,x)}\partial_y^\beta
a(x,\nabla_\xi\phi(x,\xi),\eta) \right]|_{\eta=\xi},
\end{equation*}
where
$\Psi(\eta,\xi,x)=\phi(x,\xi)-\phi(x,\eta)+(\eta-\xi)\cdot\nabla_\xi\phi(x,\xi).$
\end{theorem}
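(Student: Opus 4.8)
The plan is to deduce Theorem~\ref{th:tp} directly from Lemma~\ref{l:composition}, after interchanging the roles of the space and the frequency variables. I would start from the representation \eqref{eq:tp}--\eqref{eq:c2} of $TP$. Since the amplitude $c_2$ in \eqref{eq:c2} does not depend on $z$, the $z$-integration in \eqref{eq:tp} produces $\widehat u$, so that, after renaming the outer frequency variable $\eta$ as $\xi$ and the inner one $\xi$ as $\eta$, the composition takes the form $Bu(x)=\int_\Rn e^{i\phi(x,\xi)}c(x,\xi)\widehat{u}(\xi)\,\dslash\xi$ with
\begin{equation*}
c(x,\xi)=\int_\Rn\int_\Rn e^{i(\phi(x,\eta)-\phi(x,\xi)+y\cdot(\xi-\eta))}\,a(x,y,\eta)\,p(y,\xi)\,dy\,\dslash\eta .
\end{equation*}
Thus the theorem is entirely a statement about the amplitude $c$, and it remains to estimate $c$ and its derivatives.

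Next I would observe that $c(x,\xi)$ is, up to transposing arguments and reassigning which variable plays the role of ``space'' and which that of ``frequency'', exactly of the form of the amplitude $c$ in Lemma~\ref{l:composition}. Concretely, I would apply Lemma~\ref{l:composition} with the Lemma's outer pair $(x,\xi)$ corresponding to the present $(\xi,x)$ and with the $z$-variable playing no role, with the Lemma's pair of integration variables $(y,\eta)$ corresponding to the present $(\eta,y)$, and with $\psi$ equal to $\phi$ with its two arguments transposed, $q$ equal to $p$ with its two arguments transposed, and $b(y,\xi,z,\eta)$ equal to $a$ evaluated at the correspondingly permuted arguments, so that $b$ is independent of $z$. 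The main point to verify is that the hypotheses match: conditions \eqref{eq:phase21}--\eqref{eq:phase22}, which control $\nabla_\xi\phi$ and the $\xi$-derivatives of $\phi$, translate into conditions \eqref{eq:psias1}--\eqref{eq:psias2} for $\psi$; the decay of $a$ in the variable $y$ assumed in \eqref{eq:a2} translates into exactly the decay in the last variable required of $b$ in Lemma~\ref{l:composition}; and the hypothesis on $p$, which improves in its first (``space'') argument, translates into the hypothesis on $q$ improving in its second (``frequency'') argument, with the roles of the orders $t_1$ and $t_2$ interchanged. No further decay hypotheses are needed, so Proposition~\ref{prop-add} does not enter.

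Finally, Lemma~\ref{l:composition} then yields \eqref{eq:cest} for $c$, and tracking the orders through the dictionary above --- the Lemma's $(m_1,m_2,m_3,m_4)$ become $(m_3,m_1,0,m_2)$ and its $(t_1,t_2)$ become $(t_2,t_1)$ --- gives precisely the asserted bound $|\partial_x^\alpha\partial_\xi^\beta c(x,\xi)|\leq C_{\alpha\beta}\bra{x}^{m_1+m_2+t_1}\bra{\xi}^{m_3+t_2}$. Similarly, the asymptotic expansion \eqref{eq:asympt} of Lemma~\ref{l:composition}, which is improving in the Lemma's $\xi$-variable, becomes here an expansion improving in $x$, and substituting the same dictionary into \eqref{eq:asympt} produces the displayed series (with $\Psi$ obtained from the $\Psi$ of Lemma~\ref{l:composition} by the same transposition of arguments). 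I expect the only genuine work to be this bookkeeping: keeping straight the six order parameters and the argument transpositions, and checking that the asymmetric hypotheses imposed on $\phi$ in the ``$\xi$-direction'' are exactly the ones Lemma~\ref{l:composition} requires of $\psi$; there is no new analytic ingredient beyond Lemma~\ref{l:composition} itself.
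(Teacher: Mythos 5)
Your proof is correct and essentially reproduces the paper's own argument, which consists exactly of renaming variables $x'=\eta$, $y'=\xi$, $\xi'=x$, $\eta'=y$ in \eqref{eq:c2} and then invoking Lemma~\ref{l:composition} with $\psi(x',\xi')=\phi(\xi',x')$, $q(x',\eta')=p(\eta',x')$, $b(y',\xi',z,\eta')=a(\xi',\eta',y')$. Your dictionary of order parameters $(m_1^L,m_2^L,m_3^L,m_4^L)=(m_3,m_1,0,m_2)$, $(t_1^L,t_2^L)=(t_2,t_1)$ and the check that \eqref{eq:phase21}--\eqref{eq:phase22} turn into \eqref{eq:psias1}--\eqref{eq:psias2} simply make explicit the bookkeeping that the paper leaves to the reader.
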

\begin{proof}
Let us rename the variables
\[
x^\prime=\eta, y^\prime=\xi, \xi^\prime=x, \eta^\prime=y.
\]
Then formula (\ref{eq:c2}) becomes
\[
c_2(\xi^\prime,x^\prime)=\iint
e^{i(\psi(y^\prime,\xi^\prime)-\psi(x^\prime,\xi^\prime)+(x^\prime-y^\prime)
\cdot \eta^\prime)} b(y^\prime,\xi^\prime,\eta^\prime)
q(x^\prime,\eta^\prime) dy^\prime \dslash\eta^\prime,
\]
if we choose $\psi(x^\prime,\xi^\prime)=\phi(\xi^\prime,x^\prime),
q(x^\prime,\eta^\prime)=p(\eta^\prime,x^\prime),$ and
$b(y^\prime,\xi^\prime,z,\eta^\prime)=a(\xi^\prime,\eta^\prime,y^\prime).$
Then we can apply Lemma \ref{l:composition} to complete the proof.
\end{proof}
If we apply Theorem \ref{th:tp} with $p=1$, we can simplify the
amplitude of the operator $T$. 
Operators of such type is a simplified version of
Fourier integral operators and the local versions are described
by, for example, Kumano-Go \cite{Ku1}, Stein \cite{St}, and many
other authors. This allows one to make representations for
operators as oscillatory integrals to depend on fewer
variables.

\begin{cor}\label{cor:repr}
Let $T$ be the operator from Theorem \ref{th:tp} with phase $\phi$
satisfying (\ref{eq:phase21}), (\ref{eq:phase22}), and amplitude
satisfying (\ref{eq:a2}). Then $T$ can be written in the form
\[
Tu(x)=\int_\Rn e^{i\phi(x,\xi)} c(x,\xi) \widehat{u}(\xi) 
\dslash\xi,
\]
with amplitude $c(x,\xi)$ satisfying
\[
|\partial_x^\alpha\partial_\xi^\beta c(x,\xi)|\leq
C_{\alpha\beta}\bra{x}^{m_1+m_2} \bra{\xi}^{m_3},
\]
for all $\alpha, \beta$, and all $x,\xi\in\Rn$.
Moreover,
\begin{equation*}\label{eq:asymptcor}
c(x,\xi) \thicksim \sum_{\beta} \frac{i^{-|\beta|}}{\beta !}
\partial_\eta^{\beta}\left[
e^{i\Psi(\eta,\xi,x)}\partial_y^\beta
a(x,y,\eta)|_{y=\nabla_\xi\phi(x,\xi)} \right]|_{\eta=\xi},
\end{equation*}
where $\Psi$ is as in Theorem \ref{th:tp}.
\end{cor}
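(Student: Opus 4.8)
\emph{Proof proposal.} The plan is to obtain Corollary \ref{cor:repr} as the degenerate case $p\equiv 1$ of Theorem \ref{th:tp}. First I would check that the constant symbol $p(x,\xi)=1$ meets the hypotheses imposed on $p$ in Theorem \ref{th:tp}: one needs $|\partial_x^\alpha\partial_\xi^\beta p(x,\xi)|\leq C_{\alpha\beta}\bra{x}^{t_1-|\alpha|}\bra{\xi}^{t_2}$, and this holds with the choice $t_1=t_2=0$, since every derivative of $p$ of order at least one vanishes identically while $p$ itself is bounded by $\bra{x}^0\bra{\xi}^0$. For this symbol the pseudo-differential operator $P(x,D)$ is the identity, so the composition $B=T\circ P(x,D)$ coincides with $T$ itself. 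The remaining hypotheses of Theorem \ref{th:tp} — the conditions \eqref{eq:phase21}, \eqref{eq:phase22} on $\phi$ and the estimate \eqref{eq:a2} on $a$ — are exactly the assumptions made in the corollary, so there is nothing further to verify.

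Next I would simply read off the conclusion of Theorem \ref{th:tp} with these choices. It gives that $T$ admits the representation
\[
Tu(x)=\int_\Rn e^{i\phi(x,\xi)}c(x,\xi)\widehat{u}(\xi)\,\dslash\xi,
\]
with an amplitude $c(x,\xi)$ depending on two variables only, and the amplitude estimate $|\partial_x^\alpha\partial_\xi^\beta c(x,\xi)|\leq C_{\alpha\beta}\bra{x}^{m_1+m_2+t_1}\bra{\xi}^{m_3+t_2}$ becomes, upon substituting $t_1=t_2=0$, precisely the asserted bound $|\partial_x^\alpha\partial_\xi^\beta c(x,\xi)|\leq C_{\alpha\beta}\bra{x}^{m_1+m_2}\bra{\xi}^{m_3}$ for all $\alpha,\beta$ and all $x,\xi\in\Rn$.

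Finally, for the asymptotic formula I would specialise the expansion
\[
c(x,\xi)\thicksim\sum_{\alpha,\beta}\frac{i^{-(|\alpha|+|\beta|)}}{\alpha!\beta!}\partial_x^\alpha p(\nabla_\xi\phi(x,\xi),\xi)\,\partial_\eta^{\alpha+\beta}\left[e^{i\Psi(\eta,\xi,x)}\partial_y^\beta a(x,\nabla_\xi\phi(x,\xi),\eta)\right]|_{\eta=\xi}
\]
of Theorem \ref{th:tp} to $p\equiv 1$: since $\partial_x^\alpha p$ equals $1$ for $\alpha=0$ and vanishes for $\alpha\neq 0$, only the terms with $\alpha=0$ survive, and the double sum collapses to $\sum_{\beta}\frac{i^{-|\beta|}}{\beta!}\partial_\eta^\beta\left[e^{i\Psi(\eta,\xi,x)}\partial_y^\beta a(x,y,\eta)|_{y=\nabla_\xi\phi(x,\xi)}\right]|_{\eta=\xi}$, which is the expansion displayed in the corollary (with $\Psi$ as in Theorem \ref{th:tp}). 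There is no substantive obstacle here, as the corollary is literally a special case of the preceding theorem; the only point requiring a little care is the bookkeeping of the order of operations in the asymptotic series — keeping straight that $\partial_y^\beta a$ is formed first, the $y$-slot is then frozen at $\nabla_\xi\phi(x,\xi)$, and only afterwards $\partial_\eta^\beta$ is applied and $\eta$ is set equal to $\xi$.
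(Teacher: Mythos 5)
Your proposal is correct and takes exactly the route the paper intends: the paper introduces Corollary \ref{cor:repr} with the remark "If we apply Theorem \ref{th:tp} with $p=1$, we can simplify the amplitude of the operator $T$," and your argument — checking $p\equiv1$ satisfies the hypotheses with $t_1=t_2=0$, noting $P=\mathrm{Id}$, and observing that only $\alpha=0$ survives in the double sum — simply carries out that specialisation in detail.
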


We can apply Corollary \ref{cor:repr} to pseudo-differential
operators to obtain a the standard quantisation
for pseudo-differential operators with amplitudes 
having decay in only one of the
variables. Different quantisations of operators are
possible, including symbols of the type
$c(tx+(1-t)y,\xi)$ for $0\leq t\leq 1$, with Weyl
quantization given by $t=1/2$. However, for the purposes
of this paper and for simplicity
we will restrict our attention to the
case $t=1$. Other cases may be done with small
modifications.

\begin{cor}\label{cor:repr1}
Let $T$ be an operator of the form
\[
Tu(x)=\int_\Rn\int_\Rn e^{i(x-y)\cdot\xi} a(x,y,\xi)
u(y)\,dy\,\dslash\xi,
\]
with amplitude $a=a(x,y,\xi)\in C^\infty(\Rone^{n}
\times\Rn\times\Rn)$ satisfying
\[
|\partial_x^\alpha\partial_y^\beta\partial_\xi^\gamma
a(x,y,\xi)|\leq C_{\alpha\beta\gamma}\bra{x}^{m_1}
\bra{y}^{m_2-|\beta|} \bra{\xi}^{m_3},
\]
for all $\alpha, \beta,\gamma$, and all $x,y,\xi\in\Rn$.
Then $T$ can be written in the form of a
pseudo-differential operator 
\[
Tu(x)=\int_\Rn e^{ix\cdot\xi} c(x,\xi) \widehat{u}(\xi) 
\dslash\xi,
\]
with amplitude $c(x,\xi)$ satisfying
\[
|\partial_x^\alpha\partial_\xi^\beta c(x,\xi)|\leq
C_{\alpha\beta}\bra{x}^{m_1+m_2} \bra{\xi}^{m_3},
\]
for all $\alpha,\beta$, and all $x,\xi\in\Rn$.
Moreover, we have the asymptotic expansion
\[
c(x,\xi) \thicksim \sum_\beta \frac{i^{-|\beta|}}{\beta !}
\partial_\xi^\beta\partial_y^\beta
a(x,y,\xi)|_{y=x}.
\]
\end{cor}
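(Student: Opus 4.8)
The plan is to realise this statement as a special case of Corollary \ref{cor:repr}, and hence ultimately of Theorem \ref{th:tp}, by taking the phase $\phi(x,\xi)=x\cdot\xi$. First I would verify that this phase satisfies the hypotheses (\ref{eq:phase21}) and (\ref{eq:phase22}) of Theorem \ref{th:tp}. Indeed, $\nabla_\xi\phi(x,\xi)=x$, so $\bra{\nabla_\xi\phi(x,\xi)}=\bra{x}$ and the two-sided bound (\ref{eq:phase21}) holds with $C_1=C_2=1$. For (\ref{eq:phase22}): $\partial_\xi^\beta\phi(x,\xi)=0$ for $|\beta|\geq 2$ and equals $x$ for $|\beta|=1$, so $|\partial_\xi^\beta\phi(x,\xi)|\leq C_\beta\bra{x}$; and the mixed derivatives $\partial_x^\alpha\partial_\xi^\beta\phi(x,\xi)$ vanish for $|\alpha|+|\beta|\geq 3$ and are bounded (by $1$) when $|\alpha|=|\beta|=1$, so $|\partial_x^\alpha\partial_\xi^\beta\phi(x,\xi)|\leq C_{\alpha\beta}$ holds as well. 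The amplitude hypothesis on $a$ in Corollary \ref{cor:repr1} is exactly (\ref{eq:a2}), so all assumptions of Corollary \ref{cor:repr} are met.

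Applying Corollary \ref{cor:repr} with $\phi(x,\xi)=x\cdot\xi$ gives immediately that $T$ can be written as $Tu(x)=\int_\Rn e^{ix\cdot\xi}c(x,\xi)\widehat u(\xi)\,\dslash\xi$, with $c$ satisfying $|\partial_x^\alpha\partial_\xi^\beta c(x,\xi)|\leq C_{\alpha\beta}\bra{x}^{m_1+m_2}\bra{\xi}^{m_3}$, which is the stated estimate. It remains only to simplify the asymptotic expansion. Here the key computation is that the auxiliary phase
\[
\Psi(\eta,\xi,x)=\phi(x,\xi)-\phi(x,\eta)+(\eta-\xi)\cdot\nabla_\xi\phi(x,\xi)
= x\cdot\xi - x\cdot\eta + (\eta-\xi)\cdot x = 0
\]
vanishes identically, so $e^{i\Psi(\eta,\xi,x)}\equiv 1$ and all its $\eta$-derivatives are $0$ except the zeroth. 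Moreover $\nabla_\xi\phi(x,\xi)=x$, so the substitution $y=\nabla_\xi\phi(x,\xi)$ becomes $y=x$. Thus the expansion in Corollary \ref{cor:repr},
\[
c(x,\xi)\thicksim \sum_\beta \frac{i^{-|\beta|}}{\beta!}
\partial_\eta^\beta\left[ e^{i\Psi(\eta,\xi,x)}\partial_y^\beta a(x,y,\eta)|_{y=\nabla_\xi\phi(x,\xi)}\right]\Big|_{\eta=\xi},
\]
collapses: the only surviving contribution when the $\partial_\eta^\beta$ and the evaluation interact through the product rule is the term where all $\eta$-derivatives land on $\partial_y^\beta a(x,x,\eta)$, giving
\[
c(x,\xi)\thicksim\sum_\beta\frac{i^{-|\beta|}}{\beta!}\partial_\xi^\beta\partial_y^\beta a(x,y,\xi)|_{y=x},
\]
which is exactly the claimed formula. (One should be a little careful that differentiating $\partial_y^\beta a(x,y,\eta)$ in $\eta$ commutes with setting $y=x$, which is clear since $y$ and $\eta$ are independent variables and the evaluation $y=x$ does not involve $\eta$.)

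The only genuine point requiring attention—and the place I would spend the most care—is confirming that the general asymptotic formula of Corollary \ref{cor:repr} indeed degenerates cleanly when $\Psi\equiv 0$; everything else is a direct specialisation. Since $e^{i\Psi}\equiv 1$ has all higher $\eta$-derivatives vanishing, the Leibniz expansion of $\partial_\eta^\beta[1\cdot\partial_y^\beta a(x,y,\eta)|_{y=x}]$ contains only the single term $\partial_\eta^\beta\partial_y^\beta a(x,y,\eta)|_{y=x}$, and this is the assertion. No additional integration by parts or stationary-phase analysis is needed beyond what is already contained in Theorem \ref{th:tp} and its corollary. This also recovers, as noted in the introduction, the reduction formula of Cordes for SG-amplitudes: if $a$ satisfies SG-type decay $|\partial_\xi^\alpha\partial_x^\beta\partial_y^\gamma a|\leq C\bra{x}^{s_1-|\beta|}\bra{y}^{s_2-|\gamma|}\bra{\xi}^{s_3-|\alpha|}$, then the same argument (using Remark-type improvements available for SG-amplitudes) yields $c\in SG^{s_1+s_2,s_3}$ with the stated expansion.
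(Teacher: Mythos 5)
Your proposal is correct and follows exactly the route the paper intends: the paper states Corollary \ref{cor:repr1} immediately after Corollary \ref{cor:repr} as the specialisation $\phi(x,\xi)=x\cdot\xi$, and the key observation — that $\Psi(\eta,\xi,x)\equiv 0$ when $\phi$ is linear, so the Leibniz expansion of $\partial_\eta^\beta[e^{i\Psi}\cdot\,]$ collapses to a single term — is precisely what makes the asymptotic formula reduce to the stated one.
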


Note that from all asymptotic expansions it is clear that if $a$
has additional decay with respect to some variables, so does the
new amplitude (see Remarks \ref{rem1},
\ref{rem2} and \ref{rem3} for such conclusions).

\section{Weighted estimates in Sobolev spaces}

Again, we will be studying operators
\begin{equation}\label{eq:Tbasic}
Tu(x)=\int_\Rn\int_\Rn e^{i(\phi(x,\xi)-y\cdot\xi)} a(x,y,\xi)
u(y) \,dy\,\dslash\xi
\end{equation}
with real-valued phase $\phi=\phi(x,\xi)\in
C^\infty(\Rn\times\Rn)$ and amplitude $a=a(x,y,\xi)\in
C^\infty(\Rn\times\Rn\times\Rn).$ In \cite{RS4}, we proved
weighted $L^2$ estimates for such operators, also
giving explicit estimates on the required number of
derivatives of phase and amplitude. In Theorem
\ref{th:sob1} we will give weighted Sobolev estimates
for these operators.

We will say that $f\in H^{s_1,s_2}(\Rn)$ if
$f\in\Scal^\prime(\Rn)$ and $\Pi_{s_1,s_2} f\in L^2(\Rn)$, where
$\Pi_{s_1,s_2}$ is a pseudo-differential operator with symbol
$\pi_{s_1,s_2}(x,\xi)=\bra{x}^{s_1}\bra{\xi}^{s_2}.$ We can also
define operators $\widetilde{\Pi}_{s_1,s_2}$ with symbols
$\widetilde{\pi}_{s_1,s_2}(y,\xi)=
\bra{y}^{s_1}\bra{\xi}^{s_2}.$ Then
it can be shown (e.g. \cite{Co2}) that
$\widetilde{\Pi}_{-s_1,-s_2}=\Pi^{-1}_{s_1,s_2}.$ Also, Sobolev space
defined by $\widetilde{\Pi}_{s_1,s_2}$, i.e. the space of all $f\in
\Scal^\prime(\Rn)$ with $\widetilde{\Pi}_{s_1,s_2} f\in L^2(\Rn)$,
coincides with $H^{s_1,s_2}(\Rn).$

The strategy for obtaining estimates in weighted Sobolev
spaces will be to use the calculus developed in the previous
section to reduce the problem to global $L^2$ estimates
for model cases. We will first formulate global $L^2$ estimates
for operators of the form \eqref{eq:Tbasic}. In fact, 
under certain conditions, we
will reduce them to operators of a simplified form,
for which we have the following result, which appeared as
a special case of Theorem 2.5 in \cite{RS4}:

\begin{theorem}\label{Th2.5}
Let operator $T$ be defined by
\begin{equation*}\label{T2}
 Tu(x)
 =\int_{\R^n}\int_{\R^n} e^{i(x\cdot\xi+\varphi(y,\xi))}a(y,\xi)u(y) dy d\xi.
\end{equation*}
Let the real-valued phase
$\varphi=\varphi(y,\xi)\in C^\infty\p{\R^n\times\R^n}$ satisfy
\begin{equation}\label{nondeg}
\abs{\det\partial_y\partial_\xi\varphi(y,\xi)}\geq C>0,
\end{equation}
and
\[
\abs{\partial_y^\alpha\partial_\xi \varphi(y,\xi)}\leq C_{\alpha},
\qquad
\abs{\partial_y\partial_\xi^\beta \varphi(y,\xi)}\leq C_{\beta}
\]
for all $1\leq |\alpha|,|\beta|\leq 2n+2$ and all $y,\xi\in\R^n$.
Let $a=a(y,\xi)\in C^\infty\p{\R^n\times\R^n}$ satisfy
\[
\abs{\partial_y^\alpha\partial_\xi^\beta a(y,\xi)}
\leq C_{\alpha\beta},
\]
for all $|\alpha|, |\beta|\leq 2n+1$, and all $y,\xi\in\R^n$.
Then the operator $T$ is $L^2(\R^n)$-bounded, and satisfies
\[
\n{T}_{L^2\to L^2}\leq C
\sup_{|\alpha|,|\beta|\leq 2n+1}
 \n{\partial_y^\alpha\partial_\xi^\beta a(y,\xi)}
  _{L^\infty\p{\R^n\times\R^n}}.
\]
\end{theorem}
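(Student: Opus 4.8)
The plan is to pass to the Fourier side and then run a $TT^{*}$/almost‑orthogonality argument. First I would write $Tu(x)=\int e^{ix\cdot\xi}(Su)(\xi)\,d\xi$, where $(Su)(\xi)=\int e^{i\varphi(y,\xi)}a(y,\xi)u(y)\,dy$; since $T$ is then a dimensional constant times $\Fcal^{-1}\circ S$, Plancherel's theorem reduces the claim to the bound $\n{S}_{L^{2}\to L^{2}}\le C\sup_{|\alpha|,|\beta|\le 2n+1}\n{\partial_y^\alpha\partial_\xi^\beta a}_{L^\infty}$. To prove this I would fix partitions of unity $1=\sum_{j\in\Z^{n}}\chi_j(y)$ and $1=\sum_{l\in\Z^{n}}\psi_l(\xi)$ with $\chi_j,\psi_l\in C_0^\infty$ supported in fixed cubes about $j,l$ and with uniformly bounded derivatives, set $S_{jl}u(\xi)=\psi_l(\xi)\int e^{i\varphi(y,\xi)}a(y,\xi)\chi_j(y)u(y)\,dy$ so that $S=\sum_{j,l}S_{jl}$, and apply the Cotlar--Stein lemma: it then remains to estimate $\n{S_{jl}^{*}S_{j'l'}}$ and $\n{S_{jl}S_{j'l'}^{*}}$ by a sequence summable in $(j',l')$, uniformly in $(j,l)$.

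The operator $S_{jl}^{*}S_{j'l'}$ has kernel (in $y,y'$) equal to $\chi_j(y)\chi_{j'}(y')$ times an oscillatory integral over $\supp\psi_l\cap\supp\psi_{l'}$ with phase $\varphi(y',\xi)-\varphi(y,\xi)$, and it vanishes unless $|l-l'|\le C$. The crucial input is a lower bound for the $\xi$‑gradient of this phase, $\nabla_\xi\varphi(y',\xi)-\nabla_\xi\varphi(y,\xi)$: for each fixed $\xi$ the map $y\mapsto\nabla_\xi\varphi(y,\xi)$ has Jacobian $\partial_y\partial_\xi\varphi$, which by \eqref{nondeg} is everywhere invertible with uniformly bounded inverse (Cramer's rule together with the bound on $\partial_y\partial_\xi\varphi$), hence by the Hadamard--L\'evy global inversion theorem it is a diffeomorphism of $\R^{n}$ onto itself with uniformly Lipschitz inverse, so that $|\nabla_\xi\varphi(y',\xi)-\nabla_\xi\varphi(y,\xi)|\ge c|y-y'|$ for all $y,y',\xi$. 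Because $\xi$ now ranges over a bounded set, integrating by parts $2n+1$ times in $\xi$ is legitimate and, using this lower bound together with the hypotheses $|\partial_y\partial_\xi^\beta\varphi|\le C_\beta$ for $1\le|\beta|\le 2n+2$, yields $|K(y,y')|\le C_N\langle j-j'\rangle^{-N}\chi_j(y)\chi_{j'}(y')$ with $N=2n+1$ and $C_N\lesssim(\sup_{|\gamma|\le 2n+1}\n{\partial^\gamma a}_\infty)^{2}$; for small $|j-j'|$ one simply bounds the kernel by $\n{a}_\infty^{2}|\supp\psi_l|$. Schur's test then gives $\n{S_{jl}^{*}S_{j'l'}}\le C_N\langle j-j'\rangle^{-N}\mathbf{1}_{|l-l'|\le C}$, and symmetrically — now using that $\xi\mapsto\nabla_y\varphi(y,\xi)$ is a uniformly bi‑Lipschitz global diffeomorphism, with Jacobian $(\partial_y\partial_\xi\varphi)^{\mathrm T}$, and the bounds $|\partial_y^\alpha\partial_\xi\varphi|\le C_\alpha$ for $1\le|\alpha|\le 2n+2$ — one obtains $\n{S_{jl}S_{j'l'}^{*}}\le C_N\langle l-l'\rangle^{-N}\mathbf{1}_{|j-j'|\le C}$.

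Since $N=2n+1>2n$, both double sums over $(j',l')$ converge, Cotlar--Stein applies, and tracking the constants — everything is bilinear in $a$ with at most $2n+1$ derivatives falling on $a$ — gives $\n{S}_{L^{2}\to L^{2}}\lesssim\sup_{|\alpha|,|\beta|\le 2n+1}\n{\partial_y^\alpha\partial_\xi^\beta a}_{L^\infty}$, hence the stated estimate for $T$.

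The hard part is the gradient lower bound above. One must resist writing the phase gradient as $\bigl(\int_0^1\partial_y\partial_\xi\varphi\,dt\bigr)(y'-y)$ and estimating that matrix from below, since an average of invertible matrices may well be singular; this is precisely why the pointwise non‑degeneracy \eqref{nondeg} has to be upgraded, through a global inverse function theorem, into the uniform bi‑Lipschitz bounds that drive the integration by parts. Everything else — the frequency/space localization, which is what makes the oscillatory integrals genuinely convergent rather than merely distributional, the routine integration by parts, and the Schur and Cotlar--Stein bookkeeping — is standard, and a derivative count shows that the thresholds $2n+1$ and $2n+2$ appearing in the statement are exactly what is needed for the two double sums to be summable.
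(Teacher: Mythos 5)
The paper does not actually prove this statement: Theorem \ref{Th2.5} is quoted as a special case of Theorem 2.5 of \cite{RS4}, so there is no in-text proof to compare against. Judged on its own merits, your argument is correct and is, to my knowledge, essentially the Asada--Fujiwara style proof that underlies results of this type. The two things worth singling out as genuinely necessary steps (and which you handle correctly) are: (i) the Plancherel reduction to the adjoint operator $S$, after which you work with the phase differences $\varphi(y',\xi)-\varphi(y,\xi)$ and $\varphi(y,\xi)-\varphi(y,\xi')$, and (ii) the upgrade of the pointwise nondegeneracy $\abs{\det\partial_y\partial_\xi\varphi}\ge C>0$ to the uniform bi-Lipschitz bounds $|\nabla_\xi\varphi(y',\xi)-\nabla_\xi\varphi(y,\xi)|\gtrsim|y-y'|$ and $|\nabla_y\varphi(y,\xi)-\nabla_y\varphi(y,\xi')|\gtrsim|\xi-\xi'|$. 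You are right that a naive mean-value/matrix-average argument can fail for $n\ge 2$ and that a global inverse function theorem (Hadamard--L\'evy, using the uniform bound on $(\partial_y\partial_\xi\varphi)^{-1}$ that follows from Cramer's rule and the hypotheses) is the correct way to obtain the lower bound; this is precisely what Remark \ref{rem3.1} of the paper alludes to when it says the global inverses exist and the mean value theorem applied to them yields \eqref{nondeg1}. The derivative bookkeeping also checks out: after $N=2n+1$ integrations by parts the amplitude $a$ receives at most $2n+1$ pure $\xi$- (resp.\ $y$-) derivatives, the coefficients of the integration-by-parts operator require mixed derivatives $\partial_y\partial_\xi^\beta\varphi$ (resp.\ $\partial_y^\alpha\partial_\xi\varphi$) of order up to $2n+2$, and $N/2>n$ makes the Cotlar--Stein sums converge while the localisations $\psi_l\psi_{l'}$, $\chi_j\chi_{j'}$ supply the complementary indicator factors. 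In short: a correct proof by the expected method.
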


\begin{rem}\label{rem3.1}
In Theorem 2.5 in \cite{RS4}, we treated the case that
conditions on the phase $\varphi$ are imposed only on 
a subset of $\R^n\times\R^n$,
which contains the support of $a$. 
In such case, however, condition \eqref{nondeg}
must be replaced by
\begin{equation}\label{nondeg1}
|\partial_\xi\varphi(x,\xi)-\partial_\xi
\varphi(y,\xi)|\geq C|x-y|,\quad
|\partial_y\varphi(y,\xi)-\partial_y
\varphi(y,\eta)|\geq C|\xi-\eta|.
\end{equation}
We remark that if condition \eqref{nondeg} is satisfied
for all $y,\eta\in\R^n$ as in the case of Theorem \ref{Th2.5},
then the global inverses of
$\nabla_\xi\varphi(\cdot,\xi)$ and $\nabla_y\varphi(y,\cdot)$
exist, and condition \eqref{nondeg1} is implied by the mean value
theorem applied to them.
\end{rem}

Now, we can use this result to treat operators of the
form \eqref{eq:Tbasic}:

\begin{theorem}\label{th:l2}
Let $T$ be defined by 
\begin{equation*}\label{eq:Tbasic-a}
Tu(x)=\int_\Rn\int_\Rn e^{i(\phi(x,\xi)-y\cdot\xi)} a(x,y,\xi)
u(y) \,dy\,\dslash\xi.
\end{equation*} 
Let the real-valued phase
$\phi=\phi(x,\xi)\in C^\infty(\Rn\times\Rn)$ for some positive
constants and all $|\alpha|, |\beta|\geq 1$ satisfy
\begin{equation}\label{eq:phiass1}
\begin{aligned}
& |\det\partial_x\partial_\xi\phi(x,\xi)|\geq C_0>0, \\
& \exists x_\beta: |\partial^\beta_\xi\phi(x_\beta,\xi)|\leq C_\beta, \\
& |\partial_x^\alpha\partial_\xi^\beta\phi(x,\xi)|\leq
C_{\alpha\beta},
\end{aligned}
\end{equation}
for all $x,\xi\in\Rn$. 
Let amplitude $a=a(x,y,\xi)\in C^\infty(\Rone^{n}\times\Rn
\times\Rn)$ for some
$m\in\Rone$ satisfy
\[
|\partial_x^\alpha\partial_y^\beta\partial_\xi^\gamma
a(x,y,\xi)|\leq C_{\alpha\beta\gamma}\bra{x}^{m}
\bra{y}^{-m-|\beta|}, 
\textrm{ for all }\alpha,\beta,\gamma,
\textrm{ and all }
x,y,\xi\in\Rn.
\]
Then $T$ is bounded from $L^2(\Rn)$ to $L^2(\Rn)$.
\end{theorem}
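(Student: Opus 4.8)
The plan is to reduce $T$ to the model form treated in Theorem \ref{Th2.5} in two steps. First I would use the calculus of Section 2 --- precisely Corollary \ref{cor:repr} --- to remove the $y$-dependence of the amplitude, writing $T$ as $Tu(x)=\int_\Rn e^{i\phi(x,\xi)}c(x,\xi)\widehat{u}(\xi)\,\dslash\xi$ with a bounded symbol $c$; then I would pass to the adjoint $T^*$, which is precisely an operator of the type covered by Theorem \ref{Th2.5}, with phase $-\phi$ and amplitude $\overline{c}$. Since $\|T\|_{L^2\to L^2}=\|T^*\|_{L^2\to L^2}$, this would finish the proof, the estimates being uniform in the regularisation parameter $\epsilon$ used to define $T$ and hence passing to the limit.

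To apply Corollary \ref{cor:repr} one must check that $\phi$ satisfies hypotheses \eqref{eq:phase21}--\eqref{eq:phase22} of Theorem \ref{th:tp}. The bounds $|\partial_x^\alpha\partial_\xi^\beta\phi|\leq C_{\alpha\beta}$ for $|\alpha|,|\beta|\geq 1$ are part of \eqref{eq:phiass1}; integrating $\partial_x\partial_\xi^\beta\phi$ from the base point $x_\beta$ furnished by \eqref{eq:phiass1} yields $|\partial_\xi^\beta\phi(x,\xi)|\leq C_\beta+C|x-x_\beta|\leq C'\bra{x}$ for $|\beta|\geq 1$, which disposes of the remaining bounds in \eqref{eq:phase22}. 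The step that needs genuine care --- and the main obstacle --- is the two-sided comparison \eqref{eq:phase21}, namely $C_1\bra{x}\leq\bra{\nabla_\xi\phi(x,\xi)}\leq C_2\bra{x}$. I would derive it as in Remark \ref{rem3.1}: since $|\det\partial_x\partial_\xi\phi|\geq C_0$ and $\partial_x\partial_\xi\phi$ is bounded, the Jacobian of the map $x\mapsto\nabla_\xi\phi(x,\xi)$ has a uniformly bounded inverse, so by the global inverse function theorem this map is a bi-Lipschitz diffeomorphism of $\Rn$ with constants independent of $\xi$; combining its lower Lipschitz bound with the boundedness of $\nabla_\xi\phi$ at one fixed base point (again from \eqref{eq:phiass1}), and using $\bra{\nabla_\xi\phi}\geq 1$ to handle bounded $x$, gives \eqref{eq:phase21} uniformly in $\xi$.

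Once this is in place, the hypothesis $|\partial_x^\alpha\partial_y^\beta\partial_\xi^\gamma a(x,y,\xi)|\leq C_{\alpha\beta\gamma}\bra{x}^{m}\bra{y}^{-m-|\beta|}$ of Theorem \ref{th:l2} is exactly \eqref{eq:a2} with $m_1=m$, $m_2=-m$, $m_3=0$, so Corollary \ref{cor:repr} represents $T$ as above with $|\partial_x^\alpha\partial_\xi^\beta c(x,\xi)|\leq C_{\alpha\beta}\bra{x}^{m_1+m_2}\bra{\xi}^{m_3}=C_{\alpha\beta}$. Note that it is precisely the lower bound $\bra{\nabla_\xi\phi(x,\xi)}\geq C_1\bra{x}$ that turns the factor $\bra{y}^{-m}$, evaluated at $y=\nabla_\xi\phi(x,\xi)$ in the asymptotic expansion, into the decay $\bra{x}^{-m}$ cancelling the growth $\bra{x}^{m}$ --- which is why the balance $\bra{x}^{m}\bra{y}^{-m}$ in the amplitude is exactly what makes $T$ bounded on the unweighted $L^2$.

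Finally, a short computation gives
\[
T^*w(x)=\int_\Rn\int_\Rn e^{i(x\cdot\xi-\phi(y,\xi))}\,\overline{c(y,\xi)}\,w(y)\,dy\,\dslash\xi,
\]
which, up to the harmless constant $(2\pi)^{-n}$, is an operator of the form in Theorem \ref{Th2.5} with real-valued phase $\varphi(y,\xi)=-\phi(y,\xi)$ and amplitude $\overline{c(y,\xi)}$: condition \eqref{nondeg} holds since $|\det\partial_y\partial_\xi\varphi|=|\det\partial_y\partial_\xi\phi|\geq C_0$, the bounds $|\partial_y^\alpha\partial_\xi\varphi|,|\partial_y\partial_\xi^\beta\varphi|\leq C$ for $1\leq|\alpha|,|\beta|\leq 2n+2$ come from \eqref{eq:phiass1}, and all derivatives of $\overline{c}$ of order at most $2n+1$ are bounded by the previous step. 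Theorem \ref{Th2.5} then gives $\|T^*\|_{L^2\to L^2}<\infty$, hence $\|T\|_{L^2\to L^2}<\infty$. Apart from the global inverse function theorem step flagged above, which is the analytic heart of the matter, the proof is a routine matching of orders together with the duality argument.
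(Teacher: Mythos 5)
Your proposal is correct and follows essentially the same route as the paper: reduce via Corollary \ref{cor:repr} after verifying the phase hypotheses \eqref{eq:phase21}--\eqref{eq:phase22} (the lower bound in \eqref{eq:phase21} via the global inverse / mean-value argument of Remark \ref{rem3.1}, the upper bound by integrating from the base point $x_\beta$), and then invoke Theorem \ref{Th2.5} for the adjoint. The observation that the lower bound $\bra{\nabla_\xi\phi}\gtrsim\bra{x}$ is precisely what converts the $\bra{y}^{-m}$ decay into $\bra{x}^{-m}$ cancellation is a nice explicit remark, but the argument itself coincides with the one in the paper.
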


\begin{rem}\label{rem3.2}
There is an alternative formulation of Theorem
\ref{th:l2} if we impose conditions (\ref{eq:phiass1}) 
on the support of $a$ only, similar
to Theorem 2.5 in \cite{RS4}
(see Remark \ref{rem3.1}).
However, in this
case we also need to assume that
\[
\bra{x}\leq C\bra{\nabla_\xi\phi(x,\xi)}
\]
on the support of $a$ as well,
since it does not automatically follow from conditions
(\ref{eq:phiass1}) .
\end{rem}

\begin{proof}[Proof of Theorem \ref{th:l2}]
By Corollary \ref{cor:repr} we can reduce $T$ to
\[
S u(x)=\int_\Rn e^{i\phi(x,\xi)}c(x,\xi) \widehat{u}(\xi) 
\dslash\xi,
\]
with amplitude satisfying $|\partial_x^\alpha\partial_\xi^\beta
c(x,\xi)|\leq C_{\alpha\beta},$ for all $\alpha,\beta$. We only
need to check that conditions (\ref{eq:phase21}) of Theorem
\ref{th:tp} on the phase are fulfilled. For $x_\beta$ 
as in \eqref{eq:phiass1}, and some $z$
we have
\begin{equation*}\label{eq:phrepr}
\partial^\beta_\xi\phi(x,\xi)-\partial^\beta_\xi\phi(x_\beta,\xi)=
\nabla_x\partial^\beta_\xi \phi(z,\xi)(x-x_\beta).
\end{equation*}
It follows from (\ref{eq:phiass1}) that
\[
\bra{\partial^\beta_\xi\phi(x,\xi)}\leq
1+|\partial_\xi^\beta\phi(x,\xi)|\leq 1+C|x-x_\beta|+C_\beta\leq
C\bra{x}.
\]
So assumption (\ref{eq:phase21}) is fulfilled if we show that
$\bra{x}\leq C_1\bra{\nabla_\xi\phi(x,\xi)}.$ 
From the first inequality in \eqref{eq:phiass1}, we obtain
inequality
$$
|\nabla_\xi\varphi(x,\xi)-\nabla_\xi
\varphi(x_\beta,\xi)|\geq C|x-x_\beta|,
\text{ for all } x,\xi\in\Rn
$$
(see Remark \ref{rem3.1}).
Now estimate $\bra{x}\leq C_1\bra{\nabla_\xi\phi(x,\xi)}$
follows from the second inequality in \eqref{eq:phiass1}. 
The adjoint $S^*$ of $S$ satisfies conditions of 
Theorem \ref{Th2.5}, so it is bounded in
$L^2(\Rn).$
\end{proof}

Now we will apply this as well as the calculus developed in
the previous section to obtain
estimates in weighted Sobolev spaces.

\begin{theorem}\label{th:sob1}
Let operator $T$ be defined by 
\begin{equation*}\label{eq:Tbasic-w}
Tu(x)=\int_\Rn\int_\Rn e^{i(\phi(x,\xi)-y\cdot\xi)} a(x,y,\xi)
u(y) \,dy\,\dslash\xi.
\end{equation*}
Let the real valued phase
$\phi=\phi(x,\xi)\in C^\infty(\Rn\times\Rn)$ for all $|\alpha|,
|\beta| \geq 1$ and all $x,\xi\in\Rn$ satisfy
\begin{equation} \label{eq:phi-sob}
|\det\partial_x\partial_\xi\phi(x,\xi)|\geq C_0>0, \quad
|\partial_x^\alpha\phi(x,\xi)|\leq C_\alpha\bra{\xi}, \quad
|\partial_x^\alpha\partial_\xi^\beta\phi(x,\xi)|\leq
C_{\alpha\beta}.
\end{equation}
Assume one of the following:

\begin{itemize}
\item[(1)] For all $\alpha$, $\beta$, and $\gamma$, assume
\[
 |\partial_x^\alpha\partial_y^\beta\partial_\xi^\gamma
 a(x,y,\xi)|\leq C_{\alpha\beta\gamma}
 \bra{x}^{m_1}\bra{y}^{m_2-|\beta|}\bra{\xi}^{m_3},
\]
and for all $|\beta|\geq 1$, assume
\[
\exists x_\beta: |\partial^\beta_\xi\phi(x_\beta,\xi)|\leq C_\beta
\]
for all $x,y,\xi\in\Rn$.

\item[(2)] For all $\alpha$, $\beta$, and $\gamma$, assume
\[
 |\partial_x^\alpha\partial_y^\beta\partial_\xi^\gamma
 a(x,y,\xi)|\leq C_{\alpha\beta\gamma}
 \bra{x}^{m_1-|\alpha|}\bra{y}^{m_2}\bra{\xi}^{m_3},
\]
and for all $\alpha$ and $|\beta|\geq 1$, assume
\[
|\partial_x^\alpha\phi(x,\xi)|\leq C_{\alpha}
\jp{x}^{1-|\alpha|}\jp{\xi},\quad
 |\partial_x^\alpha\partial_\xi^\beta\phi(x,\xi)|\leq
 C_{\alpha\beta}\bra{x}^{1-|\alpha|},
\]
for all $x,y,\xi\in\Rn$.
\end{itemize}
Then $T$ is bounded from $H^{s_1,s_2}(\Rn)$ to $H^{s_1-m_1-m_2,
s_2-m_3}(\Rn)$, for all $s_1, s_2\in\Rn.$
\end{theorem}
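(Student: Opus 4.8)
The plan is to reduce $T$ to a composition of SG pseudo-differential operators with a bounded operator on $L^2$, using the calculus from Section 2. By definition of the weighted Sobolev spaces, $T : H^{s_1,s_2} \to H^{s_1-m_1-m_2, s_2-m_3}$ is bounded if and only if the operator
\[
\Pi_{s_1-m_1-m_2,\,s_2-m_3}\, T\, \widetilde{\Pi}_{-s_1,-s_2}
\]
is bounded on $L^2(\Rn)$, where I use $\widetilde{\Pi}$ on the right because it acts on the $y$-variable of $T$, matching the second argument of the amplitude $a(x,y,\xi)$, and recall $\widetilde\Pi_{-s_1,-s_2} = \Pi^{-1}_{s_1,s_2}$. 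So the whole question becomes: show that $B := \Pi_{s_1-m_1-m_2,\,s_2-m_3}\, T\, \widetilde{\Pi}_{-s_1,-s_2}$ is an operator of the form \eqref{eq:tdef} whose amplitude satisfies the hypotheses of Theorem \ref{th:l2}, i.e. is $O(\bra{x}^{m}\bra{y}^{-m})$ with matching decay in $y$-derivatives, and whose phase $\phi$ — which is unchanged by these compositions on the left (a $\psi$do in $x$) and on the right (a $\psi$do in $y$, composed against the pseudo-differential part $e^{-iy\cdot\xi}$) — still satisfies \eqref{eq:phiass1}.

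First I would handle the phase. The second and third conditions in \eqref{eq:phiass1} are immediate from \eqref{eq:phi-sob} (the bound $|\partial_\xi^\beta\phi(x_\beta,\xi)|\le C_\beta$ is exactly the extra hypothesis assumed in case (1), and in case (2) it follows by integrating $|\partial_x\partial_\xi^\beta\phi|\le C_{\alpha\beta}\bra{x}^{1-|\alpha|}$ from any fixed $x_\beta$, just as in the proof of Theorem \ref{th:l2}); the determinant condition is assumed outright. So Theorem \ref{th:l2} will be applicable to $B$ once the amplitude is under control.

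Next, the amplitude. The operator $\widetilde\Pi_{-s_1,-s_2}$ is an SG operator of order $(-s_1,-s_2)$; its composition against the right of $T$ is a composition $T\circ P$, so I invoke Theorem \ref{prop:alt} (which needs no phase hypotheses and allows the amplitude to keep dependence on the integration variable) — or, since in case (1) the amplitude $a(x,y,\xi)$ already has the decay in $y$-derivatives required by \eqref{eq:a2}, Theorem \ref{th:tp}/Corollary \ref{cor:repr} to collapse $T$ to a two-variable amplitude first. The composition $\Pi_{s_1-m_1-m_2,s_2-m_3}\circ(\,\cdot\,)$ on the left is a $P\circ T$ composition, covered by Theorem \ref{th:pt}. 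Tracking orders through \eqref{est:comp0} (with the refinements in Remark \ref{rem1}) and \eqref{eq:newamp1}: the original amplitude contributes $\bra{x}^{m_1}\bra{y}^{m_2-|\beta|}\bra{\xi}^{m_3}$ in case (1), the right SG factor adds $\bra{y}^{-s_1}\bra{\xi}^{-s_2}$, the left SG factor adds $\bra{x}^{s_1-m_1-m_2}\bra{\xi}^{s_2-m_3}$, and the new amplitude of $B$ is then $O(\bra{x}^{s_1-m_2}\bra{y}^{m_2-s_1}\bra{\xi}^0)$ with the correct decay in $y$-derivatives — precisely the hypothesis of Theorem \ref{th:l2} with $m=s_1-m_2$. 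In case (2) I use instead the $x$-decay version: the original amplitude $\bra{x}^{m_1-|\alpha|}\bra{y}^{m_2}\bra{\xi}^{m_3}$, the phase now satisfies the stronger conditions \eqref{EQ:add-dec-phase}, and Proposition \ref{prop-add} together with Remark \ref{rem3} gives the new amplitude with $x$-decay $\bra{x}^{m_1-m_2-|\alpha|}$ after the left composition; one then transfers the needed $y$-decay onto the amplitude by writing $\Pi$'s in the $\widetilde\Pi$-form, arriving again at an amplitude satisfying the hypotheses of Theorem \ref{th:l2} (possibly after swapping the roles of the weights via the equivalence $\bra{x}\le C\bra{\nabla_\xi\phi(x,\xi)}$ on the relevant region, which follows from \eqref{eq:phi-sob} and the determinant condition as in the proof of Theorem \ref{th:l2}). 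Applying Theorem \ref{th:l2} to $B$ gives $L^2$-boundedness of $B$, hence the claimed mapping property of $T$. Since $s_1,s_2$ are arbitrary, the conclusion holds for all $s_1,s_2\in\Rn$.

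\textbf{Main obstacle.} The routine part is the bookkeeping of orders; the genuine difficulty is \emph{verifying that the composed amplitudes retain enough derivative-decay} in the right variable to meet the hypotheses of Theorem \ref{th:l2} (which demands not just size $\bra{x}^m\bra{y}^{-m}$ but the improvement $\bra{y}^{-m-|\beta|}$ under $\partial_y^\beta$). In case (1) this is delivered by Remark \ref{rem1}/Remark \ref{rem2}; in case (2), where the source of decay is in $x$ rather than in $y$, one must pass through the equivalence of weights induced by the phase, and the point where this could fail is exactly the one flagged in Remark \ref{rem3.2} — that $\bra{x}\le C\bra{\nabla_\xi\phi(x,\xi)}$ — which here is forced by the determinant and derivative bounds in \eqref{eq:phi-sob}. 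So the real work is in case (2): applying Proposition \ref{prop-add} correctly and then converting $x$-decay to the $y$-decay that Theorem \ref{th:l2} wants, via the phase. Once that conversion is in hand, the proof closes immediately.
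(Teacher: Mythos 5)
Your case (1) argument is essentially the paper's proof: reduce to $L^2$-boundedness of $B=\Pi_{s_1-m_1-m_2,\,s_2-m_3}\,T\,\widetilde{\Pi}_{-s_1,-s_2}$, apply Theorems \ref{prop:alt} and \ref{th:pt} together with Remarks \ref{rem1} and \ref{rem2} to track orders, check the phase conditions as in the proof of Theorem \ref{th:l2}, and conclude via Theorem \ref{th:l2}. The order bookkeeping (arriving at an amplitude $O(\bra{x}^{s_1-m_2}\bra{y}^{m_2-s_1-|\beta|})$, so $m=s_1-m_2$) is correct and matches the paper.

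Case (2) has a genuine gap. After applying Proposition \ref{prop-add} / Remark \ref{rem3} the composed amplitude has decay in $x$-derivatives, i.e.
\[
|\partial_x^\alpha\partial_y^\beta\partial_\xi^\gamma c(x,y,\xi)|
\leq C_{\alpha\beta\gamma}\bra{x}^{s_1-m_2-|\alpha|}\bra{y}^{m_2-s_1},
\]
not the decay in $y$-derivatives that Theorem \ref{th:l2} requires. Your proposal to convert one to the other ``via the phase'' using $\bra{x}\leq C\bra{\nabla_\xi\phi(x,\xi)}$ does not work: that inequality compares the \emph{sizes} of $x$ and $\nabla_\xi\phi(x,\xi)$, but says nothing about transferring gain from $\partial_x^\alpha$ to $\partial_y^\beta$. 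Theorem \ref{th:l2} reaches $L^2$-boundedness through Corollary \ref{cor:repr}, which requires $y$-derivative decay in order to collapse the three-variable amplitude to $c(x,\xi)$; an amplitude with $x$-derivative decay does not satisfy those hypotheses, and no amount of weight-equivalence changes the type of decay. The paper sidesteps this entirely: it does \emph{not} invoke Theorem \ref{th:l2} in case (2), but instead cites Theorem~3.1 of \cite{RS4}, which is a separate $L^2$-boundedness result tailored to amplitudes with decay in the $x$-derivatives. Without either citing that external result or proving an analogue (e.g.\ by an adjoint argument that interchanges the roles of $x$ and $y$, noting the phase $x\cdot\xi-\phi(y,\xi)$ that appears in $T^*$), your case (2) does not close.
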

\begin{proof}
Let us first note that for $f\in H^{s_1,s_2}$ we have
\[
\begin{aligned}
|| Tf||_{H^{s_1-m_1-m_2,s_2-m_3}} & \leq
||\Pi_{s_1-m_1-m_2,s_2-m_3} Tf||_{L^2} \\
& \leq ||\Pi_{s_1-m_1-m_2,s_2-m_3} T
\widetilde{\Pi}_{-s_1,-s_2}||_{L^2\to L^2} ||\Pi_{s_1,s_2} f||_{L^2}
\\
& \leq C||f||_{H^{s_1,s_2}},
\end{aligned}
\]
if we show that $\Pi_{s_1-m_1-m_2,s_2-m_3} T
\widetilde{\Pi}_{-s_1,-s_2}$ is bounded in $L^2(\Rn).$
{Assume (1)}. 
If we apply Theorems \ref{prop:alt} and \ref{th:pt} 
(and also the argument of Remarks \ref{rem1} and \ref{rem2}) to
the composition
$\jp{D}^{s_2-m_3} T \jp{D}^{-s_2}$, we can conclude that
$S=\Pi_{s_1-m_1-m_2,s_2-m_3} T \widetilde{\Pi}_{-s_1,-s_2}$ is an
operator of the form (\ref{eq:Tbasic}) with amplitude $c(x,y,\xi)$
satisfying
\begin{equation}\label{EQ:weighted-amp}
|\partial_x^\alpha\partial_y^\beta\partial_\xi^\gamma c(x,y,\xi)|
\leq C_{\alpha\beta\gamma} \bra{x}^{s_1-m_2} \bra{y}^{m_2-s_1-|\beta|},
\end{equation}
for all $\alpha\,\beta,\gamma$, and for all $x,y,\xi\in\Rn$.
Here we have used the fact that conditions (\ref{eq:phi-sob})
guarantee conditions on the phase function in Theorems
\ref{prop:alt} and \ref{th:pt} (similar to the proof of Theorem \ref{th:l2}).
Now, {in the case (1)}, by
Theorem \ref{th:l2} we obtain that $S$ is bounded in $L^2(\Rn).$

Now assume (2). By Remark \ref{rem3} we get that
the amplitude of the operator $S$ above is again of the form
(\ref{eq:Tbasic}) and it satisfies estimate 
\begin{equation*}\label{EQ:weighted-amp2}
|\partial_x^\alpha\partial_y^\beta\partial_\xi^\gamma c(x,y,\xi)|
\leq C_{\alpha\beta\gamma} \bra{x}^{s_1-m_2-|\alpha|} 
\bra{y}^{m_2-s_1},
\end{equation*}
again implying the $L^2(\Rn)$--boundedness
of $S$ by Theorem 3.1 from \cite{RS4}.

\end{proof}

\section{Global smoothing for hyperbolic equations}
In this section we will consider the following Cauchy problem
\begin{equation}
\label{eq:sm}
\left\{
\begin{array}{l}
i\partial_t u -a(D_x) u = 0, \\
u(0,x)=u_0(x).
\end{array}
\right.
\end{equation}
The estimate of Theorem \ref{ths} of this
section can be also viewed as a quantitative 
way to express the fact that 
space decay of Cauchy data $u_0$ translates into the time decay
of the solution $u$. 

Let us assume that operator $a(D_x)$ has a real valued smooth 
symbol
$a=a(\xi)\in C^\infty(\R^n)$ satisfying 
$$\nabla a(\xi)\neq0 \textrm{ for all }
\xi\in\Rn.$$ 
This condition makes the equation
``dispersive'', ensuring that the singularities will escape
to infinity as $t\to\pm\infty$. The dispersiveness is in
general necessary to have the smoothing properties for
evolution equations,
although some smoothing still takes place without this
assumption (see \cite{RS6}).

Assume also that
\[
 a(\xi)=a_1(\xi)+a_0(\xi) \quad\textrm{ for large } \xi,
\]
where $a_1=a_1(\xi)\in C^\infty(\R^n\setminus0)$ 
is a positively homogeneous function
of order $1$ (i.e. $a_1(\lambda\xi)=\lambda a_1(\xi)$ for
all $\lambda>0$ and $\xi\not=0$)
satisfying $\nabla a_1(\xi)\neq0$ for all $\xi\neq0$,
and $a_0=a_0(\xi)\in S^0$ is a symbol of order zero,
i.e. a smooth function such that for every multi-index 
$\alpha$ there exists a constant $C_\alpha>0$ such that
$$|\partial^\alpha a_0(\xi)|\leq C_\alpha\jp{\xi}^{-|\alpha|}
\quad \textrm{ for all } \xi\in\Rn.$$
We note that from these assumptions it follows that $a$ itself
is a symbol of order one.

In the case $n=1$, we can easily see that
the following estimate for the solution
$u=u(t,x)$ of \eqref{eq:sm} is true for $s>1/2$:
\begin{equation}
\label{e1}
||\jp{x}^{-s}u||_{L^2(\Rone_t\times\R^n_x)}\leq C 
||u_0||_{L^2(\R^n_x)}.
\end{equation}
Note that if $a(\xi)$ itself is homogeneous,
then $u(t,x)$ is just a translation of $u_0$,
so \eqref{e1} trivially holds.
In \cite{RS2} we have presented a result 
(which in turn follows from more general results proved in 
\cite{RS6}) that implies
that estimate \eqref{e1} is also true for all $n\ge1$.
Analysis of previous sections allows us to
apply this to obtain further global properties 
of solutions to \eqref{eq:sm}. 

\begin{theorem}
Let $u(t,x)$
be the solution of the Cauchy problem \eqref{eq:sm}.
Then for $s>1/2$ and for all $k=0,1,2,\ldots$ we have estimates
\begin{equation}
\label{tk}
||t^k\jp{x}^{-k}\jp{x}^{-s} u||_{L^2(\Rone_t\times\Rnx)}
\leq C_k ||\jp{x}^k u_0||_{L^2(\Rnx)},
\end{equation}
where constant $C_k$ is independent of Cauchy data $u_0$.
\label{ths}
\end{theorem}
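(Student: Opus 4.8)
The plan is to reduce the weighted estimate \eqref{tk} to the basic estimate \eqref{e1} by commuting the weight $\jp{x}^k$ through the solution operator, which is a Fourier integral operator of the type studied in Sections 2--3. Write the solution as $u(t,x)=e^{-ita(D_x)}u_0$, so the claimed estimate reads
\[
\n{t^k\jp{x}^{-k-s}\,e^{-ita(D_x)}u_0}_{L^2(\Rone_t\times\Rnx)}
\leq C_k\n{\jp{x}^k u_0}_{L^2(\Rnx)}.
\]
Substituting $v_0=\jp{x}^k u_0$, it suffices to prove that the operator $u_0\mapsto t^k\jp{x}^{-k-s}e^{-ita(D_x)}\jp{x}^{-k}$ is bounded on $L^2(\Rone_t\times\Rnx)$, i.e. that for each fixed $t$ the operator $e^{-ita(D_x)}\jp{x}^{-k}$ maps $L^2(\Rnx)$ into itself with the bound $t^k\jp{x}^{-k-s}$ producing a finite space--time norm once we invoke \eqref{e1}.

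First I would handle $k=1$ and then iterate. The key computation is the commutator $[\jp{x},e^{-ita(D_x)}]$; writing $\jp{x}e^{-ita(D_x)}=e^{-ita(D_x)}\jp{x}+[\jp{x},e^{-ita(D_x)}]$, one finds, using that $[x_j,e^{-ita(D_x)}]=-t\,(\partial_{\xi_j}a)(D_x)e^{-ita(D_x)}$ (Heisenberg relation), that
\[
[\jp{x},e^{-ita(D_x)}]=t\,e^{-ita(D_x)}\,r_t(x,D_x),
\]
where $r_t$ is, uniformly in $t$, a symbol that is of order $0$ in $\xi$ (since $\nabla a\in S^0$ after the homogeneous part is differentiated, and $a_0\in S^0$ gives $\nabla a_0\in S^{-1}$) and of order $-1$ in $x$ (one derivative of $\jp{x}$). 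Thus
\[
\jp{x}^{-s}u(t,x)=\jp{x}^{-s}e^{-ita(D_x)}\jp{x}^{-1}(\jp{x}u_0)
-t\jp{x}^{-s-1}e^{-ita(D_x)}r_t(x,D_x)\jp{x}^{-1}(\jp{x}u_0),
\]
and applying \eqref{e1} to the first term (with data $\jp{x}u_0$, which is legitimate since $\jp{x}^{-1}$ is bounded on $L^2$) gives exactly the $k=0$ bound on a translate of the data; the point is that the \emph{second} term carries the extra factor $t\jp{x}^{-1}$ that we want. Here the calculus of Section 2 (Theorems \ref{prop:alt}, \ref{th:pt} and their remarks on decay in $x$) is used to see that $\jp{x}^{-s-1}e^{-ita(D_x)}r_t(x,D_x)\jp{x}^{-1}$ is again of the form \eqref{eq:Tbasic} with amplitude decaying as $\jp{x}^{-s}$ in the output variable and $\jp{x}^{-1}$ in the input variable, and of order $0$ in $\xi$, hence bounded $L^2\to L^2$ by Theorem \ref{th:l2}; this makes $t\jp{x}^{-s-1}e^{-ita(D_x)}r_t(x,D_x)\jp{x}^{-1}(\jp{x}u_0)$ estimable \emph{both} by \eqref{e1} applied after absorbing one power of $\jp{x}^{-1}$ and by the gain of one power of $t\jp{x}^{-1}$.

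Iterating this $k$ times, $t^k\jp{x}^{-k}\jp{x}^{-s}u(t,x)$ is expanded as a finite sum of terms of the form $\jp{x}^{-s'}e^{-ita(D_x)}R_{t}(x,D_x)\jp{x}^{-k}v_0$ with $s'\geq s$, where each $R_t$ is a $t$-uniformly bounded family of operators built out of products of the $r_t$'s (order $0$ in $\xi$, and total decay $\jp{x}^{-k}$ matched to the $k$ lost powers of $\jp{x}$ and the $k$ gained powers of $t$); precisely, each iteration trades one power of $t$ for one derivative of $a$ evaluated at $D_x$ (which is $O(1)$ in $\xi$) times one power of $\jp{x}^{-1}$. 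After normalising data to $v_0=\jp{x}^k u_0$, each summand is of the form $\jp{x}^{-s}\circ(\text{bounded }L^2\text{ operator})\circ e^{-ita(D_x)}\circ(\text{bounded }L^2\text{ operator})$ applied to $v_0$, to which \eqref{e1} applies directly (with $s>1/2$ the unchanged hypothesis), yielding $\n{\cdot}_{L^2(\Rone_t\times\Rnx)}\leq C_k\n{v_0}_{L^2}=C_k\n{\jp{x}^k u_0}_{L^2}$.

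The main obstacle I expect is bookkeeping the commutator expansion so that at every stage the $\xi$-order stays $0$ while the extra decay in $x$ keeps pace with the extra powers of $t$ --- in other words, verifying that the error symbols $r_t$ (and their iterated products with the intervening $e^{-ita(D_x)}$ reinserted via the calculus) genuinely lie in the decaying classes of Theorem \ref{th:l2} \emph{uniformly in $t$}, so that the $t$-dependence is confined entirely to the explicit prefactor $t^k$. The uniformity in $t$ of the symbol bounds for $r_t$ --- which follows from $\nabla a\in S^0$ and $a_0\in S^0$ but must be checked for the iterated compositions using the asymptotic expansions of Section 2 --- is the technical heart of the argument.
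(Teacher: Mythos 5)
Your strategy --- commute the weight through the propagator $T_t=e^{-ita(D)}$ and iterate --- is the right general idea, but the specific weight you chose, $\jp{x}$, leads to a genuine gap at exactly the point you flag as the ``technical heart.'' The assertion that $[\jp{x},T_t]=t\,T_t\,r_t(x,D)$ with $r_t$ lying in a fixed symbol class uniformly in $t$ is false. By the Heisenberg relation the conjugated weight is (up to lower order) $T_t^{-1}\jp{x}T_t=\jp{x+t\nabla a(D)}$, so the symbol of $r_t$ is essentially $\bigl(\jp{x+t\nabla a(\xi)}-\jp{x}\bigr)/t$. While this quantity and its first $\xi$-derivative are indeed bounded uniformly in $t$, the second $\xi$-derivative already contains terms of the schematic form
\[
t\,(\partial_\xi\nabla a)\otimes(\partial_\xi\nabla a):\nabla^2\!\jp{\,\cdot\,}\Big|_{x+t\nabla a(\xi)}
\ \sim\ \frac{t}{\jp{x+t\nabla a(\xi)}},
\]
which is of size $t$ in the region $x\approx -t\nabla a(\xi)$. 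So $r_t$ drifts out of every fixed symbol class as $|t|\to\infty$, and the iteration produces $t$-dependent constants; the $t$-dependence is \emph{not} confined to the explicit prefactor as you hoped. The failure is structural and comes precisely from $\jp{x}$ not being affine in $x$.

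The paper sidesteps this by commuting with the \emph{affine-in-$x$} weight $\sigma(x,\xi)=x\cdot\nabla a(\xi)$ instead of $\jp{x}$. Because $\sigma$ is linear in $x$ (and the amplitude of $T_t$ is constant), the asymptotic expansions of Theorems \ref{th:pt} and \ref{th:tp} terminate after the constant term and one obtains the \emph{exact} operator identity
\[
\tau(t,X,D)\circ T_t \;=\; T_t\circ\sigma(X,D),
\qquad
\tau(t,x,\xi)=(x+t\nabla a(\xi))\cdot\nabla a(\xi),
\]
with no $t$-dependent remainder whatsoever. The factor $t$ is then recovered algebraically from $\tau(t,X,D)-\sigma(X,D)=t|\nabla a(D)|^2$, and the weight $\jp{x}^{-k}$ only enters at the end, when one trades the powers of $\sigma(X,D)=x\cdot\nabla a(D)$ for powers of $\jp{x}$ using the weighted $L^2$ boundedness of $x\cdot\nabla a(D)\jp{x}^{-1}$ and of $\jp{x}^{-s}|\nabla a(D)|^{-2k}\jp{x}^{s}$ (which \emph{are} $t$-independent statements, so no uniformity issue arises). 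If you want to repair your proof, replace $\jp{x}$ by $x\cdot\nabla a(\xi)$ in the commutation step; the rest of your bookkeeping then closes without the problematic $t$-uniform symbol estimates.
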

\begin{proof}
For $k=0$ formula \eqref{tk} is the same as \eqref{e1}.
Let now $k\geq 1$. We want to show that estimate
\begin{equation}
\label{tka}
||t^k\jp{x}^{-s_k} u||_{L^2(\Rone_t\times\Rnx)}
\leq C_k ||\jp{x}^k u_0||_{L^2(\Rnx)},
\end{equation}
holds for $s_k>k+\frac12$.

The solution $u=u(t,x)$ to \eqref{eq:sm} is given by
$$ u(t,x)=T_tu_0(x)=\int_\Rn\int_\Rn e^{i[(x-y)\cdot\xi+t a(\xi)]} u_0(y)
dy \dslash\xi.$$
Let $\sigma(X,D)$ be a pseudo-differential operator with
symbol $\sigma(x,\xi)=x\cdot\nabla a(\xi)$.
Let us denote
 $$\tau(t,x,\xi):=\sigma(x+t\nabla a(\xi),\xi)=
(x+t\nabla a(\xi))\cdot\nabla a(\xi).$$
Taking $\phi(x,\xi)=x\cdot\xi+ta(\xi)$,
it is straightforward to see that
function $\Psi$ defined in Theorem \ref{th:tp} is
$$\Psi(\eta,\xi,x)=x\cdot(\xi-\eta)+t(a(\xi)-a(\eta))+(\eta-\xi)
\cdot(x+t\nabla a(\xi)).$$ 
We can observe that 
$\nabla_{\eta}e^{i\Psi(\eta,\xi,x)}|_{\eta=\xi}=0.$
We can also observe that the corresponding function $\Psi(x,y,\xi)$
in Theorem \ref{th:pt} is identically zero.
Since $\sigma$ is affine in $x$, it follows from Theorems
\ref{th:pt} and \ref{th:tp} that
$$\tau(t,X,D)\circ T_t=T_t\circ\sigma(X,D).$$
Now, taking $\sigma(X,D)u_0$ as the Cauchy data in \eqref{eq:sm}
and using \eqref{e1} with $s>1/2$, we get
\begin{equation*}
\label{e2}
||\jp{x}^{-s}T_t\circ\sigma(X,D) u_0||_{L^2(\Rone_t\times\Rnx)}
\leq C ||\sigma(X,D) u_0||_{L^2(\Rnx)}.
\end{equation*}
This means that we have the estimate
\begin{equation*}
\label{e3}
||\jp{x}^{-s}(x+t\nabla a(D))\cdot\nabla a(D)u||_
{L^2(\Rone_t\times\Rnx)}
\leq C ||x\cdot\nabla a(D) u_0||_{L^2(\Rnx)}
\end{equation*}
for $s>1/2$.
Since $\nabla a(D)$ 
is a vector of pseudo-differential operators of order zero,
we have the estimate 
$$||x\cdot\nabla a(D)u_0||_{L^2(\Rnx)}\leq
C||\jp{x}u_0||_{L^2(\Rnx)}$$
using the
$L^2(\Rn)$-boundedness of the operator 
$x\cdot\nabla a(D)\jp{x}^{-1}$
by Theorem \ref{th:sob1}.
Applying the same argument $k$ times, we get estimates
\begin{equation}
\label{e3l}
||\jp{x}^{-s}\left[(x+t\nabla a(D))\cdot\nabla a(D)
\right]^l u||_
{L^2(\Rone_t\times\Rnx)}
\leq C ||\bra{x}^l u_0||_{L^2(\Rnx)},
\end{equation}
for all $l=1,\ldots,k$ and $s>1/2$.
Now, it follows that
\begin{equation}\label{esta1}
\begin{aligned}
& ||t^k\,\jp{x}^{-s_k} u||_
{L^2(\R_t\times\Rnx)} \\
& =
||\jp{x}^{-s_k}|\nabla a(D)|^{-2k}
\left[(x+t\nabla a(D))\cdot\nabla a(D)
 -x\cdot\nabla a(D)\right]^k u||_{L^2(\R_t\times\Rnx)} \\
 & \leq C||\jp{x}^{-s_k}
\left[(x+t\nabla a(D))\cdot\nabla a(D)
 -x\cdot\nabla a(D)\right]^k u||_{L^2(\R_t\times\Rnx)}, 
\end{aligned}  
\end{equation}
where we have eliminated 
$|\nabla a(D)|^{-2k}$ from the estimate using the
$L^2(\Rn)$-boundedness of the operator 
$\jp{x}^{-s_k}|\nabla a(D)|^{-2k}\jp{x}^{s_k}$
by Theorem \ref{th:sob1}.
Now, we can estimate
\begin{equation}\label{esta2}
 \begin{aligned}
 & ||\jp{x}^{-s_k}
\left[(x+t\nabla a(D))\cdot\nabla a(D)
 -x\cdot\nabla a(D)\right]^k u||_{L^2(\R_t\times\Rnx)} \\  
 & \leq
 \sum_{l=0}^k C_k^l
 ||\jp{x}^{-s_k}
 \left[x\cdot\nabla a(D)\right]^{k-l}
 \left[(x+t\nabla a(D))\cdot\nabla a(D)\right]^l
  u||_{L^2(\R_t\times\Rnx)} \\ 
 & \leq
 \sum_{l=0}^k C_k^l
 ||\jp{x}^{-s_k+k-l}
  \left[(x+t\nabla a(D))\cdot\nabla a(D)\right]^l
  u||_{L^2(\R_t\times\Rnx)},
 \end{aligned}
\end{equation}  
where once again we used
the weighted $L^2(\Rn)$--boundedness of
$|\nabla a(D)|^{k-l}$ to eliminate it from the last inequality.
We also note that operators in \eqref{esta2} are of the same
form, or are scalar, and they satisfy weighted 
$L^2(\Rn)$--boundedness,
so they have good commutator properties as well.
Since $$s_k-k+l>k+\frac12-k+l\geq \frac12,$$ we can use
\eqref{e3l}. Combining it with \eqref{esta1} and
\eqref{esta2}, we conclude that
$$  
  ||t^k\,\jp{x}^{-s_k} u||_
{L^2(\R_t\times\Rnx)} \leq
C||\jp{x}^k u_0||_{L^2(\Rnx)},
$$
completing the proof of \eqref{tk}. 
\end{proof}
We remark that estimate \eqref{tk} 
with $k=1$ is sharp in the sense that
in general it does not hold for $s=1/2$. 
For example, its failure can 
be easily seen for $n=1$ and $a(\xi)=\xi$.

We have the following version of Theorem \ref{ths} if 
$a(\xi)$ itself is positively homogeneous of order one.
In this case $a(\xi)$ has a singularity at the origin 
which causes a small additional complication when we
eliminate operators of the form
$|\nabla a(D)|^{-2k}$ from weighted estimates.
In this case we have the following corollary from the
proof of Theorem \ref{ths}.

\begin{cor}
If $a(\xi)$ is positively homogeneous of order one with 
$\nabla a(\xi)\not=0$ for all $\xi\not=0$,
estimate \eqref{tk} of
Theorem \ref{ths} still holds for $0\leq k< \frac{n-1}{2}$.
\end{cor}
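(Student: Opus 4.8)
The plan is to repeat the proof of Theorem~\ref{ths} essentially word for word; the only obstacle is that when $a$ is positively homogeneous of degree one the symbol $a$ is no longer smooth at the origin, so that $\nabla a(\xi)$, $|\nabla a(\xi)|^{2}$, $|\nabla a(\xi)|^{k-l}$ and $|\nabla a(\xi)|^{-2k}$, although bounded, fail to lie in $S^{0}$ near $\xi=0$. Hence the steps of the proof of Theorem~\ref{ths} in which operators built from these symbols were removed from weighted $L^{2}$ estimates via Theorem~\ref{th:sob1} must be re-examined.

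The key point is that each of these functions is \emph{positively homogeneous of degree zero} and smooth on $\Rn\setminus\{0\}$: since $|\nabla a(\xi)|$ is homogeneous of degree zero and does not vanish on the unit sphere, one has $c\le|\nabla a(\xi)|\le C$ on $\Rn\setminus\{0\}$, so that $|\nabla a|^{k-l}$ (with $l\le k$) and $|\nabla a|^{-2k}$ are smooth and homogeneous of degree zero there. The associated Fourier multipliers are therefore Calder\'on--Zygmund operators --- a constant multiple of the identity plus an operator with a standard Calder\'on--Zygmund kernel --- and so are bounded on $L^{2}\big(\Rn;\jp{x}^{2\lambda}\,dx\big)$ whenever $\jp{x}^{2\lambda}$ is a Muckenhoupt $A_{2}$ weight, i.e.\ whenever $|\lambda|<n/2$; equivalently, $\jp{x}^{-\lambda}P\jp{x}^{\lambda}$ is bounded on $L^{2}(\Rn)$ for $|\lambda|<n/2$ and $P$ any of these multipliers.

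With this in hand I would run the proof of Theorem~\ref{ths} verbatim, making the following replacements. The intertwining relation $\tau(t,X,D)\circ T_{t}=T_{t}\circ\sigma(X,D)$, with $\sigma(x,\xi)=x\cdot\nabla a(\xi)$ and $\tau(t,x,\xi)=(x+t\nabla a(\xi))\cdot\nabla a(\xi)$, is an exact operator identity which is checked directly by commuting the multiplications by $x_{j}$ past the Fourier multiplier $T_{t}$ and using $\tau(t,X,D)=x\cdot\nabla a(D)+t|\nabla a(D)|^{2}$; this uses only that $\nabla a(\xi)$ is bounded, not that $a$ is smooth. The operator identity $[(x+t\nabla a(D))\cdot\nabla a(D)-x\cdot\nabla a(D)]^{k}=t^{k}|\nabla a(D)|^{2k}$ is unchanged, and estimate \eqref{e1} --- the case $k=0$ --- continues to hold for this homogeneous $a$ (it is the classical smoothing estimate for the corresponding wave-type equation, and is also covered by the results quoted in the proof of Theorem~\ref{ths}). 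Every remaining use of Theorem~\ref{th:sob1} in the proof of Theorem~\ref{ths} occurs when removing an operator of the form $\jp{x}^{-\lambda}P\jp{x}^{\lambda}$ from a weighted $L^{2}$ norm --- namely the bounds for $x\cdot\nabla a(D)\jp{x}^{-1}$ (and its iterates, used in deriving \eqref{e3l}), for $\jp{x}^{-s_{k}}|\nabla a(D)|^{-2k}\jp{x}^{s_{k}}$ in \eqref{esta1}, and for $|\nabla a(D)|^{k-l}$ in \eqref{esta2} --- and each of these is now justified by the $A_{2}$-weighted Calder\'on--Zygmund bound above in place of Theorem~\ref{th:sob1}.

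The hard part, which produces the restriction on $k$, is just the bookkeeping of weights. The $A_{2}$ condition forces every weight exponent occurring in these reductions to have absolute value strictly below $n/2$, and the largest exponent that appears is the $s_{k}$ of the weight $\jp{x}^{-s_{k}}$ used in \eqref{esta1} when removing $|\nabla a(D)|^{-2k}$; the proof of Theorem~\ref{ths} only needs $s_{k}>k+\tfrac12$. Hence one may --- and must --- choose $s_{k}\in\big(k+\tfrac12,\,n/2\big)$, which is possible precisely when $k+\tfrac12<n/2$, i.e.\ $0\le k<\tfrac{n-1}{2}$. (When $k\ge1$ this already forces $n\ge4$, so the auxiliary requirement $1<n/2$ needed for $x\cdot\nabla a(D)\jp{x}^{-1}$ is automatic.) With such a choice of $s_{k}$ the argument of Theorem~\ref{ths} goes through unchanged and yields \eqref{tk} in the stated range.
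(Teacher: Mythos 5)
Your proposal is correct and follows essentially the same route as the paper: the paper justifies the corollary in one line by noting that $|\nabla a(D)|^l$ is a Fourier multiplier with symbol positively homogeneous of degree zero, hence (citing Kurtz--Wheeden) bounded on $L^2(\jp{x}^{2s_k}dx)$ exactly when $|s_k|<n/2$, and then intersects this with the requirement $s_k>k+\tfrac12$ to obtain $k<\tfrac{n-1}{2}$. Your $A_2$/Calder\'on--Zygmund formulation of that step is the same argument spelled out in more detail.
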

To justify it, we note first that operator
$|\nabla a(D)|$
has symbol which is positively homogeneous of order zero.
Consequently, for any $l$,
operators $\jp{x}^{-s_k}|\nabla a(D)|^{l}\jp{x}^{s_k}$ is 
$L^2(\Rn)$-bounded
for $-\frac{n}{2}< s_k< \frac{n}{2}$
(see Kurtz and Wheeden \cite{KW}, for example). Since we also need 
$s_k>k+\frac12$, we see that the conclusion of Theorem
\ref{ths} remains valid for $k<\frac{n-1}{2}.$


\par

\end{document}